\newtheoremstyle{plain}% % Name
  {}%                    % Space above
  {}%                    % Space below
  {\itshape}%            % Body font
  {}%                    % Indent amount
  {\bfseries}%           % Theorem head font
  {.}%                   % Punctuation after theorem head
  { }%                   % Space after theorem head, ' ', or \newline
  {\thmname{#1}\thmnumber{ #2}\thmnote{ (#3)}}%                                     % Theorem head spec (can be left empty, meaning `normal')
\theoremstyle{plain}
\newtheorem{theorem}{Theorem}
\newtheorem{lemma}{Lemma}
\newtheorem{proposition}{Proposition}
\newtheorem{corollary}{Corollary}
\theoremstyle{definition}
\newtheorem{definition}{Definition}
\renewenvironment{proof}[1][\proofname]{%
  \par\pushQED{\qed}\normalfont%
  \topsep6\p@\@plus6\p@\relax
  \trivlist\item[\hskip\labelsep\bfseries#1\@addpunct{.}]%
  \ignorespaces
}{%
  \popQED\endtrivlist\@endpefalse
}
\definecolor{darkblue}{rgb}{0.0,0.0,0.3}
\newcommand{\forces}{\Vdash }
\newcommand{\nforces}{\nVdash}
\newcommand{\imp}{\rightarrow}
\newcommand{\Imp}{\Rightarrow}
\newcommand{\cImp}{\Leftarrow}
\newcommand{\BImp}{\Leftrightarrow}
\newcommand{\bimp}{\leftrightarrow}
\newcommand{\proves}{\vdash}
\newcommand{\nproves}{\nvdash}
\newcommand{\dm}{\Diamond}
\newcommand{\bx}{\Box}
\newcommand{\Intk}{\ensuremath{\mathsf{Int}_\bk}}
\newcommand{\ipc}{\ensuremath{\mathsf{IPC}}}
\newcommand{\siv}{\ensuremath{\mathsf{S4}}}
\newcommand{\sivv}{\ensuremath{\mathsf{S4V}}}
\newcommand{\bk}{\ensuremath{\mathbf{K}}}
\newcommand{\bv}{\ensuremath{\mathbf{V}}}
\newcommand{\kb}{\Diamond\bk}
\newcommand{\sft}[1]{\textsf{#1}}
\renewcommand{\gg}{\Gamma}
\newcommand{\gd}{\Delta}
\newcommand{\ig}{\in \Gamma}
\newcommand{\iel}{\ensuremath{\mathsf{IEL}}}
\newcommand{\ielm}{\ensuremath{\mathsf{IEL^-}}}
\newcounter{model}
\begin{document}

\title{Intuitionistic Epistemic Logic}

\author{Sergei Artemov$\ \ \ \ $\&$\ \ $ Tudor Protopopescu\\
{\small The CUNY Graduate Center} \\
{\small 365 Fifth Avenue, rm. 4329}\\
{\small New York City, NY 10016, USA }
%\\
%{\tt sartemov@gc.cuny.edu} 
}
\date{\today}
\maketitle

\begin{abstract}
We outline an intuitionistic view of knowledge which maintains the original Brou\-wer-Heyting-Kolmogorov semantics for intuitionism and is consistent with the well-known approach that intuitionistic knowledge be regarded as the result of verification. We argue that on this view co-reflection $A \imp \bk A$ is valid and the factivity of knowledge holds in the form $\bk A \imp \neg\neg A$ `known propositions cannot be false'.

We show that the traditional form of factivity $\bk A \imp A$ is a distinctly classical principle which, like {\it tertium non datur} $A\vee\neg A$, does not hold intuitionistically, but, along with the whole of classical epistemic logic, is intuitionistically valid in its double negation form $\neg\neg(\bk A\imp A)$.  

Within the intuitionistic epistemic framework the knowability paradox is resolved in a constructive manner. We argue that this paradox is the result of an unwarranted classical reading of constructive principles and as such does not have the consequences for constructive foundations traditionally attributed it. 
\end{abstract}

\section{Introduction}

Our goal is to lay the formal foundation for the study of knowledge from an intuitionistic point of view.
The resulting notions of knowledge and belief, hence, should be faithful to the intended semantics of intuitionistic logic: the  Brouwer-Heyting-Kolmogorov (BHK) semantics.
This well-established view regards \textbf{belief and knowledge as the product of verification}. 

While the standard domain of our theory is the same as that of BHK -- mathematical statements, proofs and verifications -- we aim to show that BHK and the resulting intutionistic systems of epistemic logic, \ielm\ and \iel, yield principles of constructive epistemic reasoning which apply in more general settings. 
This framework also offers a natural resolution of the Church-Fitch knowability paradox, and suggests a more accommodating formal basis for an intuitionistically inspired philosophical verificationism than plain intuitionistic logic.

Intuitionistic belief and knowledge behave quite differently from their classical counterparts, and while comparisons are helpful and apposite it must be kept in mind that assumptions and distinctions which make sense classically may not intuitionistically, or take a different form.

The fundamental difference between intuitionistic and classical knowledge lies in their relationship to their respective notions of truth. 

According to the BHK semantics an intuitionistic proposition is true if proved. 
Since intuitionistic belief and knowledge is the product of verification, the intuitionistic truth of a proposition is sufficient for both belief and knowledge because intuitionistic truth contains proof and every proof is also a verification: 
% \clearpage

  \begin{center}
      \emph{Intuitionistic Truth $\ \ \Imp\ $ Intuitionistic Knowledge}.
  \end{center}
\noindent This insight is fundamental to the nature of intuitionistic reasoning about epistemic propositional attitudes, and how it differs from classical epistemic reasoning. Intuitionistically the principle of the \emph{constructivity of truth}, a.k.a \emph{co-reflection} ($\bk$ is the knowledge modality)
\[
A \imp \bk A\label{CT}\tag{co-reflection}
\]
is a truism about both belief and knowledge -- for the aforementioned reason that all proofs are verifications. 
Classically, of course, it is invalid because it asserts a form of omniscience, that all classical truths are classically known.

What about the truth condition on knowledge, `known propositions are true', in the intuitionistic setting?  Classically, 
this yields the \emph{factivity of knowledge} which has the logical form of the \emph{reflection principle}  $\bk A \imp A$. 
However, in the intuitionistic setting, reflection is too strong, to the extent of being invalid. 
The verification-based approach allows that justifications more general than proof can be adequate for belief and knowledge, e.g.\ verification by trusted means which do not necessarily produce explicit proofs of what is verified. According to this view, the reflection principle for intuitionistic knowledge is not universally valid: it is possible to have a provably verified proposition $A$ without possessing a specific proof of $A$ itself (cf.\ section~\ref{sec:knp}). 

On the other hand, the truth condition for knowledge in the form  `known propositions cannot be false' is intuitionistically valid and produces the principle 
\[ 
\phantom{intuitionistic reflection}\ \ 
\bk A \imp \neg\neg A .\tag{intuitionistic reflection}
\]
Indeed, if $\bk A$ then it is verified that $A$ has a proof, not necessarily specified in the process of verification; from this we conclude that it is not possible to produce a proof that $A$ cannot have a proof, hence $\neg\neg A$.
Naturally, in the classical framework, reflection and intuitionistic reflection are equivalent, and adopting the former instead of the latter is harmless, but not in the intuitionistic setting.%[
\footnote{The double negation translation (\cite{Dosen1984,Kolmogorov1925,Glivenko1929,Heyting1966,vanDalen1988v1,vanDalen2002,Chagrov1997}) which, for atomic $A$ is $\neg\neg A$, is a canonical way to approximate the classical truth of $A$ intuitionistically. 
Whereas the BHK requirement for the intuitionistic truth of $A$ is to have a proof of $A$, the formula $\neg\neg A$ can be intuitionistically true without an explicit proof of $A$; truth in this sense may be regarded as some form of classical truth.}%]

One should not expect all classical logical laws to stay valid intuitionistically. Many classical principles cannot be transplanted into the intuitionistic domain as they are, e.g.\ $A \lor \neg A$, $\neg\neg A \imp A$, $((A \imp B) \imp A) \imp A$, etc.\
These are all classical tautologies, not valid intuitionistically 
without natural adjustments that provide them with appropriate constructive meaning. 
There are many ways to make these formulas intuitionistically acceptable without changing their classical reading, e.g., by Glivenko's Theorem, \cite{Glivenko1929}: 
\[
\mathsf{CPC} \proves A \ \BImp \ \ipc \proves \neg\neg A 
\]
(here {\sf CPC} denotes {\it classical propositional logic} and {\sf IPC} -- {\it intuitionistic propositional logic}, cf.\ \cite{vanDalen1988v1}). 
This means that for each classical tautology $A$, the formula $\neg \neg A$ is a valid intuitionistic principle.%[
\footnote{\label{ft:Kol}Another canonical way of embedding classical logic principles into the intuitionistic domain is Kolmogorov's \cite{Kolmogorov1925} double negation translation ``$\neg\neg$ each subformula". }
%]
It turns out that the reflection of classical knowledge $\bk A \imp A$ is from the same cohort: it is valid classically, while not valid intuitionistically, but its double negation translation $\neg \neg (\bk A \imp A)$ is intuitionistically valid and may be adopted as an intuitionistic form of reflection for knowledge. 
In \iel\ we have opted for its equivalent version $\bk A \imp \neg \neg A$, which has an even more vivid factivity reading.
In this respect intuitionistic reflection can be read as claiming that intuitionistic knowledge yields truth \emph{but without an explicit proof of that truth}. 
Therefore, what classical factivity expresses is preserved by intuitionistic reflection. 

So, intuitionistic knowledge of $A$ is positioned strictly in between $A$ and $\neg\neg A$:
\[ A\imp \bk A\imp \neg\neg A \]
which provides a basis for a more refined analysis of constructive truth than plain intuitionistic logic. 
If we assume that the double negation translation is a meaningful intuitionistic representation of classical truth, these findings can be presented as
 \begin{center}
      \emph{Intuitionistic Truth $\ \ \Imp\ $ Intuitionistic Knowledge $\ \ \Imp\ $ Classical Truth}.
  \end{center}

\subsection{Logics of Intuitionistic Belief and Knowledge}

Extending the BHK semantics with the notion of verification, and conceiving of intuitionistic belief and knowledge as a result of it, yields intuitionistic systems of epistemic logic, \ielm, \iel.
The key property of these systems, and hence of intuitionistic belief and knowledge, in contrast to the classical, is that they all validate co-reflection, and can distinguish between different strengths of reflection, or the truth condition.%[
\footnote{\label{fn:FK} Though all are compatible with reflection, endorsing reflection in an intuitionistic setting would represent a restrictive proof-based view of knowledge and trivialize the resulting epistemic logic system. See the end of section \ref{sec:bhkk}.} %]

We begin with a general discussion of intuitionistic, verification-based, belief and knowledge, and the principles which distinguish them from each other and from their classical counterparts (section \ref{sec:bhkk}).
The intuitionistic validity of co-reflection for belief and knowledge changes the situation dramatically, and intuitionistic knowledge is not distinguishable from belief in the same way as classical knowledge from classical belief.
\medskip\par
{\bf The basic intuitionistic logic of belief} \ielm\ is given by the epistemic closure principle $$\bk(A\imp B)\imp(\bk A\imp\bk B)$$ along with the adoption of co-reflection \[ A\imp\bk A,\] which states that intuitionistic beliefs respect BHK-proofs: if $A$ is constructively true, i.e.\ has a specific proof, then the agent knows/believes that $A$. 
In \ielm, theoretically, false beliefs are not {\it a priori} ruled out.

As mentioned above, the intuitionistic truth condition on knowledge, i.e.\ intuitionistic factivity, `known propositions cannot be false' admits a formalization as intuitionistic reflection  $$\bk A\imp\neg\neg A.$$ 
Adding intuitionistic reflection to \ielm\ leads to the system \iel, which is {\bf the logic of intuitionistic knowledge}. 
We will see that in the intuitionistic context, in the presence of co-reflection, intuitionistic reflection as well as other natural alternatives of the intuitionistic truth condition are equivalent to provable consistency $\neg \bk \bot$ (section \ref{sec:knf}, appendix \ref{sec:itcond}) therefore, \iel, is both {\bf the logic of intuitionistic knowledge} and {\bf the logic of provably consistent intuitionistic beliefs}. 
 
We prove soundness and completeness of \ielm\ and \iel\ with respect to appropriate classes of self-explanatory Kripke-style models and derive some notable epistemic principles (section \ref{sec:mod}, appendix \ref{sec:comp}). 

We also address the knowability paradox and intuitionistic responses to it (section \ref{sec:kbpdx}).
Our framework yields a well-founded constructive resolution which shows that the knowability paradox is the product of an unwarranted classical reading of constructive principles and does not have the consequences for constructive foundations traditionally attributed it.

Finally, we address intuitionistic counter-arguments to co-reflection, and to the very idea of intuitionistic knowledge arising from criticism of intuitionistic responses to the knowability paradox.
We argue that none of these arguments are well-founded because their view of intuitionistic knowledge is not intuitionistic enough (section \ref{sec:crit}). 

\section{The Brouwer-Heyting-Kolmogorov Semantics and Knowledge}\label{sec:bhkk}

The Brouwer-Heyting-Kolmogorov semantics for intuitionistic logic (cf.\ \cite{vanDalen1988v1}) holds that a proposition, $A$, is true if there is a proof of it, and false if we can show that the assumption that there is a proof of $A$ leads to a contradiction. Truth for the logical connectives is defined by the following clauses: 
\begin{itemize}
\item a proof of $A \land B$ consists in a proof of $A$ and a proof of $B$;
\item a proof of $A \lor B$ consists in giving either a proof of $A$ or a proof of $B$;
\item a proof of $A \imp B$ consists in a construction which given a proof of $A$ returns a proof of $B$;

\item $\neg A$ is an abbreviation for $A \imp \bot$, and $\bot$ is a proposition that has no proof. 

\end{itemize}

Our question is: if we add an epistemic operator $\bk$ to our language, what should be the intended semantics of a proposition of the form $\bk A$? 

We adopt the view that \textbf{an intuitionistic epistemic state (belief or knowledge) is the result of verification}, where a verification is evidence considered sufficiently conclusive for practical purposes.%[
\footnote{\label{fn:vercite} Verifications, hence, are not necessarily generalizations of the notion of `canonical proof' found in philosophical verificationism, see e.g.\ \cite{Contu2006, Dummett1973, Dummett1963, Dummett1976, Dummett1977, Dummett1991, Martin-Lof1998, Prawitz1980, Prawitz1998d, Prawitz2005, Prawitz2006, Schroeder-Heister2006, Sundholm2002, Tennant1997, Usberti2006, vanDalen1988v2}. 
A verification does not have to be canonical or even a means for acquiring a canonical verification, consider the examples in section \ref{sec:knp}.
For a similar reading of $\bk$ in a more formal setting see Williamson's proposal for an intuitionistic epistemic logic \cite{Williamson1992}, see section \ref{sec:williamsoniel}.
See \cite{Wright1982} for some discussion of the nature of verification and its relation to a generalised intuitionism.} 
%]
The idea that verifications are not necessarily proofs is common in the verificationist literature, see the citations in note \ref{fn:vercite}.
In a formal setting Williamson incorporates non-proof verifications in his system, \cite{Williamson1992}; see also note \ref{ft:Wmson}. 

\subsection{The BHK Clause for the Knowledge Modality}

Before we introduce a $\bk$-clause, we assume that the conception of proof has two salient features: (1) proofs are conclusive of the proposition they establish and hence are (the purest form of) verifications; (2) proofs are checkable -- that something is a proof is itself capable of proof.

We propose the following epistemic BHK clause governing the knowledge operator $\bk$:
\begin{itemize}
\item a proof of $\bk A$ is conclusive evidence of verification that $A$ has a proof.
\end{itemize}
Such a verification, of course, need not deliver a proof of $A$ itself.
For example, consider propositions from the point of view of Intuitionistic Type Theory, \textsf{ITT}.
Propositions are special types whose elements are proofs (or evidence or witnesses), that is each inhabitant of the proposition type may be considered to be a proof of the proposition.
For each type $A$ one can form a `truncated type' $inh(A)$,%[
\footnote{or $||A||$.} 
%]
see \cite{HoTT2013} (also called `squash types', `monotypes' or `bracket types', see \cite{Constable1998,Kopylov2001}), which contains no information beyond the fact that the type $A$ is inhabited.
Where the inhabitants of the proposition type $A$ are proofs of $A$, which each convey some specific information, $inh(A)$ ``forgets'' all such information other than the existence of these inhabitants. 
A truncated type can have at most one inhabitant which serves only to indicate either `yes' or `no' depending on whether $A$ is indeed inhabited.
A truncated proposition of type $inh(A)$, hence, conveys only the information that $A$ has a proof, but does not deliver any such proof of $A$.
We can interpret $\bk A$ as just such a truncated type $inh(A)$. 

Even in this verbal form, the $\bk$-clause suffices for some epistemic analysis. For example, we can check that $\bk (A \vee B)$ does not necessarily yield that $\bk A$ holds or that $\bk B$ holds 
\[ \bk (A\vee B)\ \not\imp \ \bk A \vee \bk B ;\] 
Indeed, $\bk (A\vee B)$ states that there is a proof of disjunction $A\vee B$, but does not actually produce such a proof, or even a means for constructing such a proof, so we cannot decide which of  $\bk A$ and $\bk B$ holds, this is confirmed by Theorem \ref{th:nkdisj} below. 

\subsection{The BHK Meaning of Knowledge Assertions}
\label{sec:bhk-meaning}

By the BHK clause above $\bk A$ is read as 
\emph{it is verified that $A$ holds intuitionistically}, i.e.\ that $A$ has a proof, not necessarily specified in the process of verification. However, the intuitionistic epistemic logic we construct, section \ref{sec:IEL}, also captures a reading:
\begin{equation}
\mbox{ \emph{it is verified that $A$ holds in some not specified constructive sense.}}\label{second}\tag{*}%[
\footnote{I.e.\ is not necessarily a BHK-compliant proof, but constructive in a more general sense, see the examples of highly probable truth, and empirical knowledge in section \ref{sec:knp} below.
The examples of zero-knowledge protocols, testimony of an authority, existential generalization, and classified sources can be read as examples of the first reading.} %]
\end{equation}
The language of intuitionistic logic is not sensitive enough to distinguish these readings.
We regard the first reading as our `official' one, and leave exploration of the second to later investigations.%[
\footnote{For instance a bi-modal classical language can distinguish and capture these readings, see section \ref{sec:embed}.} %] 

A question to consider is whether a proposition is intuitionistically true only if an agent is aware of a proof or whether the possibility of such awareness is enough? 
Traditionally, intuitionism assumes that proofs are available to the agent. 
For Brouwer and Heyting proofs are mental constructions,%[
\footnote{Brouwer \cite[p.4]{Brouwer1981} considered intuitionistic mathematics to be ``an essentially languageless activity of the mind''. Heyting \cite[p.2]{Heyting1966} says ``In the study of mental mathematical constructions `to exist' must be synonymous with `to be constructed'". See also \cite{Heyting1930, Heyting1964}.} %] 
and so the existence of a proof requires its actual construction. 
This position is the one traditionally adopted by verificationists, see e.g.\ \cite{Dummett1973, Dummett1977, Dummett1998}. 
On the other hand Prawitz \cite{Prawitz1980, Prawitz1998, Prawitz1998d, Prawitz1998e, Prawitz1998f, Prawitz1998g} and Martin-L\"of \cite{Martin-Lof1987, Martin-Lof1998, Martin-Lof1990}, consider proofs to be timeless entities, and that intuitionistic truth consists in the existence of such proofs, and their potential to be constructed. 

The principles of intuitionistic knowledge and belief we discuss below are compatible with either of these positions. 
Hence, if BHK proofs are assumed to be available to the agent, then $\bk A$ can be read as `$A$ is believed' or `$A$ is known', depending on the assumptions made about the epistemic state.  
If proofs are platonic entities, not necessarily available to the knower, then $\bk A$ is read as `$A$ can be believed (or known) under appropriate conditions'. 
To keep things simple, in our exposition we follow the former, more traditional, understanding.
So to claim $\bk A$ is true is to claim that the agent is aware of a proof that it has been verified that $A$ has a proof. 

Co-reflection does not hold for the combination of the above two positions where proofs are timeless platonic entities while knowledge requires actual awareness; the existence of a proof does not guarantee an agent is aware of it. 
This combination rather validates a form of knowability, if a proposition has a proof then it is possible to know it, i.e.\ \[A \imp \kb A\] (see section \ref{sec:kbpdx} for discussion of this and other possible formalizations of the knowability principle).

\subsection{Principles of Intuitionistic Epistemic Logic}\label{sec:pik}

Co-reflection and reflection play a special role in intuitionistic epistemology. Co-reflection is the defining principle of intuitionistic epistemic states, it holds for any shade of intuitionistic belief or knowledge. 
Reflection, on the other hand, holds only for epistemic states with degenerated knowledge at which  `$A$ is known' is equivalent to `$A$' itself.  
This, of course, is very different from the classical case where reflection is the defining principle of knowledge and co-reflection holds only for omniscient epistemic states. 

These facts follow from the assumption that intuitionistic belief and  knowledge is the result of verification, and the fact that intuitionistic truth is based on proof. 
Given this, reflection and co-reflection may be seen as expressing two informal principles about the relationship between truth and verification-based knowledge:
\begin{enumerate}
  \item proof yields verification-based knowledge (co-reflection);
  \item verification-based knowledge yields proof (reflection).
  \end{enumerate}
A BHK-compliant epistemology accepts 1 and rejects 2. 

\subsubsection{Proof Yields Belief and Knowledge}\label{sec:pyk}

The principle that proof yields verification is practically constitutive of proof,%[
\footnote{Though not common in mainstream epistemology there are, or have been, mathematical skeptics. Perhaps the best known mathematical skeptical argument is the one Descartes puts forward in \cite{Descartes1642}, see also \cite{Floridi1998, Floridi2000, Floridi2004}. See also \cite{Frege1884} and \cite{Husserl1901} who both discuss the skeptical consequences of empiricism regarding mathematical knowledge.} %]
precisely because \textbf{proofs are a special and most strict kind of verification}, and immediately justifies the validity of the formal principle of co-reflection\[ A\imp \bk A.\] 

That proofs are taken to be verifications is a matter of the ordinary usage of the term which understands a proof as ``an argument that establishes the validity of a proposition'' \cite{Proof}. 
It is also a fairly universal view in mathematics (cf. \cite{Buss1998,Putnam1977,Tarski1969}). 
Within computer science this concept is the cornerstone of a big and vibrant area in which one of the key purposes of computer-aided proofs is for the verification of the propositions in question \cite{Clarke1996,Constable1998}. 
Amongst intuitionists the idea of a constructive proof is often treated as simply synonymous with verification \cite{Dummett1970, Dummett1979, Horsten2014}.
Hence co-reflection should be read as expressing the constructive nature of intuitionistic truth, which itself being a strict verification yields verification-based belief or knowledge.

According to the BHK reading of intuitionistic implication co-reflection states that \textbf{given a proof of $A$ one can always construct a proof of $\bk A$.} 
Is such a construction always possible? 
Indeed, it is well established that proof-checking is a valid operation on proofs,%[
\footnote{See \cite{Hilbert1932, Kreisel1962, Lob1955, Godel1933, Artemov2001}. Moreover, proof-checking is generally a feasible operation, routinely implemented in a standard computer-aided proof package. 
} %]
so if $x$ is a proof of $A$ then it can be proof-checked and hence produce a proof $p(x)$ of `$x$ is a proof of $A$'. 
Having checked a proof we have a proof that the proposition is proved, hence verified, hence known or believed.
In whatever sense we consider a proof to be possible, or to exist, co-reflection states that the proof-checking of this proof is always possible, or exists, in the same sense. 
So, by the principle that proof yields verification we have that a proof of $A$ yields knowledge or belief of $A$, and by proof-checking we obtain a proof of $\bk A$.

On the type-theoretic reading of $\bk A$ co-reflection is immediate; given a specific inhabitant of the proposition $A$ it is guaranteed that the type is inhabited, hence $inh(A)$ holds.  

We are not, of course, the first to outline arguments that an intuitionistic conception of truth supports co-reflection, see for instance \cite{Dean2009, Hart1979, Khlentzos2004, Murzi2010, Percival1990, Usberti1994, Williamson1982, Williamson1988, Wright1993a}.%[
\footnote{Cf. \cite[p.90]{Usberti1994}: ``\ldots [co-reflection] can be interpreted only according to the intuitionistic meaning of implication, so that it expresses the trivial observation that, as soon as a proof of $A$ is given, $A$ becomes known''.} %] 
Our contention is that co-reflection, when properly understood in line with the intended BHK semantics, is a fairly immediate consequence of uncontroversially intuitionistic views about truth, and {\bf should therefore be endorsed as foundational for a properly intuitionistic epistemology}.

\subsubsection{Knowledge Does Not Yield Proof}\label{sec:knp}

If not all verifications are BHK-comp\-liant proofs then it follows that verification-based knowledge does not necessarily yield proof, and consequently that reflection is not a valid intuitionistic epistemic principle. 
It is possible, hence, to have knowledge of a proposition without possessing its proof, i.e.\ without it being intuitionistically true.%[
\footnote{\label{ft:Wmson} Cf. \cite[p.68]{Williamson1992} ``\dots $\bk$ requires more than warranted assertion. However, it does not follow that $\bk$ requires strict proof; that would not be a reasonable requirement when $\bk$ is applied to  empirical statements\dots''.} %]

The BHK reading of reflection $\bk A\imp A$ says that \textbf{given a proof of $\bk A$ one can always construct a proof of $A$}, that is it asserts that there is a uniform procedure, or construction, which given a proof of $\bk A$ returns a proof of $A$ itself. 
Since we allow that $\bk A$ does not necessarily produce specific proofs this requirement is not met for intuitionistic knowledge, and \emph{a fortiori} for belief.
What uniform procedure is there that can take any adequate, non-proof, verification of $A$ and return a proof of $A$? 
There is no such construction. Consider the following counter-examples against the factivity of intuitionistic verification-based knowledge. 

The following four examples correspond to the principal reading of $\bk A$ as {\emph{$A$ has a proof, not necessarily specified in the process of verification}:

\paragraph{Zero-knowledge protocols}\label{sec:zero}

A class of cryptographic protocols, normally probabilistic, by which the prover can convince the verifier that a given statement is true, without conveying any additional information apart from the fact that the statement is true. The canonical way these protocols work is that the prover possesses a proof $p$ of $A$, and convinces the verifier that $A$ holds without disclosing $p$. 

\paragraph{Testimony of an authority}\label{sec:testimony}
 
Even concerning mathematical knowledge reflection fails. Take Fermat's Last Theorem. For the educated mathematician it is credible to claim that it is known, but most mathematicians could not produce a proof of it. Indeed, more generally, any claim to mathematical knowledge based on the authority of mathematical experts is not intuitionistically factive. It is legitimate to claim to know a theorem when one understands its content, and can use it in one's reasoning, without being in a position to produce or recite the proof. 

\paragraph{Classified sources}\label{sec:classified}

In a social situation, imagine a statement of $A$ coming from a most reliable source but with a classified origin. So, there is no access to the `strict proof' of $A$. Should we abstain from reasoning about $A$ as something known unless we gain full access to the strict proof? This is not how society works. We treat $\bk A$ as weaker than $A$, and keep reasoning constructively without drawing any conclusion about a specific proof of $A$.  

\paragraph{Existential generalisation}\label{sec:existential}

Somebody stole your wallet in the subway. You have all the evidence for this: the wallet is gone, your backpack has a cut in the corresponding pocket, but you have no idea who did it. You definitely know that `there is a person who stole my wallet" (in logical form, $\exists x S(x)$, where $S(x)$ stands for `$x$ stole my wallet') so you have a justification $p$ of $\bk(\exists x S(x))$. If $\bk(\exists x S(x)) \imp \exists x S(x)$ held intuitionistically, you would have a constructive proof $q$ of $\exists x S(x)$.  However, a constructive proof of the existential sentence $\exists x S(x)$ requires a witness $a$ for $x$ and a proof $b$ that $S(a)$ holds. You are nowhere near meeting this requirement. So, $\bk(\exists x S(x)) \imp \exists x S(x)$ does not hold intuitionistically.
\bigskip\par
Here are examples which can be captured by the broader constructive reading of $\bk A$ (\ref{second}): 

\paragraph{Highly probable truth}\label{sec:probable}

Suppose there is a computerized probabilistic verification procedure, which is constructive in nature, that supports a proposition $A$ with a cosmologically small probability of error, so its result satisfies the strictest practical criteria for truth. Then any reasonable agent accepts this certification as adequate justification of $A$, hence $A$ is known. Moreover, observing the computer program to terminate with success, we have a proof that $\bk A$. However, we do not have a proof of $A$ in the sense required by BHK; we cannot even claim that such a proof exists.

\paragraph{Empirical knowledge}\label{sec:perceptual}

Suppose that some phenomenon has been repeatedly observed under optimal experimental conditions. 
After a certain number of repeated observations these are taken to confirm some hypothesis, $A$, that predicted the phenomenon. 
For practical scientific purposes these observations are a verification of the hypothesis, hence it is legitimate to claim $\bk A$, but there is no reason to claim having a BHK-compliant proof of this.

\bigskip

If we allow that knowledge may be gained by any of the methods above then reflection is not valid according to the BHK semantics.

How might we be in a position of having a proof of $\bk A$ without thereby being in a position to obtain a proof of $A$ itself? 
Consider the example of zero-knowledge protocols, which by design yield verifications of statements without disclosing any further information about them. 
Given a proof that $A$ is verified in this manner is there a general method for constructing a proof of $A$ itself from this information?
Clearly not. 
This is because, in general, claiming 
\begin{center}
 \emph{it is proved that $A$ is verified}
\end{center}
is a weaker statement than 
\begin{center}
 \emph{it is proved that $A$}.
\end{center}
All the former statement gives us is a guarantee that $A$ has a verification, which by assumption does not necessarily yield an explicit proof. 

The invalidity of the straightforward constructive reading of the classical reflection $\bk A\imp A$ is particularly evident on the type-theoretical interpretation of $\bk A$.
Given a truncated proposition $inh(A)$ all the information one has is that $A$ is inhabited, hence one knows there is a proof, but $inh(A)$   does not deliver any such inhabitant, hence one is not in a position to assert $A$ since one cannot produce an element of the type.

\subsubsection{Intuitionistic Reflection and the Truth Condition on Knowledge}\label{sec:knf}

Nevertheless reflection has often been taken to be practically definitive of knowledge from a constructive standpoint.
For instance, Williamson \cite{Williamson1992}, in outlining his system of intuitionistic epistemic logic affirms that $\bk A \imp A$  holds. 
Similarly Proietti, \cite{Proietti2012}, argues that knowledge is factive in his system of intuitionistic epistemic logic. 
Wright states that an operator could not be a knowledge operator if it were not factive \cite{Wright1994}.%[
\footnote{``I take this to be a non-negotiable feature of the concept of knowledge. If a theory takes a view of something which it purports to regard as knowledge, but which lacks this feature, it is not a theory of knowledge'' \cite[p.242]{Wright1994}.} %]
More generally still the principle $\bk A \imp A$  is probably the only principle about knowledge that has not been seriously contested.%[
\footnote{Hazlett's \cite{Hazlett2010, Hazlett2012} would seem to be the only such challenge. However Hazlett challenges the idea that the truth of $A$ is necessary for the truth of the utterance `S knows $A$'; utterances of `S knows $A$' may be true even if $A$ is false. He is careful to distinguish this challenge from the claim that it is possible to know false propositions -- that he does not challenge. Hazlett's arguments do not appear to be relevant to our concerns since we are not occupied with the truth conditions of utterances of knowledge ascriptions, but the logical analysis of the epistemic operator.} %]
And, of course, it is implied by virtually every extant definition of knowledge. 
Must not our arguments above be wrong in some fashion? 
Are we not arguing the intuitionist is committed to holding that false propositions can be known? 
No, on the contrary: {\bf intuitionistic reflection $$\bk A\imp\neg\neg A$$ is classically equivalent to reflection and hence is acceptable both classically and intuitionistically}. 

Every analysis of knowledge accepts the truth condition on knowledge, that only true propositions can be known and that false propositions cannot be known. 
It is this, and not reflection \emph{per se}, which is definitive of knowledge. 
An intuitionistic formalization of the truth condition for knowledge is the principle of intuitionistic reflection which can be read as
\begin{center}
\emph{if $A$ is known then it is impossible that $A$ is false.}
\end{center}
An attempt to rewrite it into the `simpler' form $\bk A\imp A$ fails intuitionistically: reflection is strictly stronger intuitionistically than intuitionistic reflection and, as we argue, is not intuitionistically valid. 

Intuitionistic reflection can be interpreted from two perspectives, both illuminating with respect to what it captures.

On the one hand, intuitionistic reflection says that to have knowledge is to have a verification which rules out the possibility of the intuitionistic falsehood of $A$, that is the possibility of a disproof of $A$.%[
\footnote{See \cite{Wansing2010} for an extension of BHK by dually
refuted falsehood.} 
%]
Intuitionistic knowledge, hence, establishes the logical possibility of the intuitionistic truth of a proposition.
Where classical knowledge guarantees the (classical) truth of a proposition, intuitionistic reflection guarantees the possibility of proof (this is made vivid in the Kripke semantics developed below, see section \ref{sec:kvsbel}). 

On the other hand, via the embedding of classical logic into intuitionistic logic we see that intuitionistic reflection expresses just what classical reflection does. 
The double negation translation of classical logic into intuitionistic logic (cf. \cite{Glivenko1929,Chagrov1997,Kolmogorov1925,vanDalen2004}) suggests the informal intuitionistic reading of $\neg\neg A$ as `$A$ is classically true'.
From this point of view intuitionistic reflection expresses that intuitionistic knowledge yields classical truth, i.e.\ knowledge of $A$ yields the truth of $A$ but \emph{without a specific witness.} 
A verification yielding knowledge provides sufficient information to claim the proof-less truth of $A$, which is just what reflection claims classically. 
The double negation embedding of \sft{CPC} into \sft{IPC} extends to the classical reflection principle -- principle 4 below, which is equivalent to intuitionistic reflection -- accordingly {\bf intuitionistic reflection expresses as much as its classical counterpart does}. 
Though classical reflection does not hold intuitionistically nothing is lost, the intuitions that support classical reflection can be captured in an intuitionistic setting.

Here is a list of other intuitionistically meaningful logical ways to express the truth condition: 
    
\begin{enumerate}[noitemsep]

   \item $\neg A \imp \neg \bk A$;

   \item $\neg(\bk A \land \neg A)$;

   \item $\neg\bk\bot$;

   \item $\neg\neg(\bk A \imp A)$.
\end{enumerate}

Intuitionistically 1, 2 and 3 can be considered as directly saying that knowledge of falsehood is impossible. 
4 can be seen as saying that reflection is classically valid, or alternatively that it is logically possible for verification to yield proof.

Principles 1, 2 and 4 are classically equivalent to reflection, and all 1 -- 4 are intuitionistically strictly weaker than reflection. 
In this way we see that intuitionistically reflection is not required in order to maintain the truth condition on knowledge, or to distinguish belief from knowledge.

As we will see, in the presence of co-reflection all 1 -- 4 and intuitionistic reflection are equivalent.
Theoretically 3 is the simplest, but we pick intuitionistic reflection since it most clearly expresses the intuitionistic notion of factivity (see section \ref{sec:IEL} and appendix \ref{sec:itcond} for more on the relation between these expressions of the intuitionistic truth condition). 

In the absence of the truth condition we do not rule out that intuitionistically verified propositions may be false.
For example, before the European discovery of Australia all available evidence supported the proposition `all swans are white'; this turned out to be false and can be taken as an instance of verification-based belief which may be false, which is captured by a logic without intuitionistic reflection.

Given the equivalences noted above, the truth condition asserts a kind of provable consistency.
For example, while the proposition that `all swans are white' does not imply a contradiction, and hence is consistent, this does not imply that it is possible to know false consistent propositions.
The truth condition requires that the consistency of a proposition be provable.
Since `all swans are white' is not provably consistent it is ruled out as knowledge by the truth condition.

\section{Intuitionistic Epistemic Logic}\label{sec:IEL}

We are now in a position to define the systems of intuitionistic belief and knowledge, \ielm\ and \iel\ respectively.
These systems, we argue, respect the intended BHK meaning of intuitionism and incorporate a reasonable verification-based epistemic operator.

The language is that of intuitionistic propositional logic augmented with the epistemic propositional operator $\bk$.
The simplest system, \ielm, {\it the logic of intuitionistic beliefs}, is given by:

\begin{definition}[\ielm] $ $ \
\medskip\par
\noindent\textbf{Axioms}

% \begin{enumerate} 
% {\em
%  \item 
1. Axioms of propositional intuitionistic logic;

%  \item 
2. $\bk(A \imp B) \imp (\bk A \imp \bk B)$; \hfill (distribution)

%  \item 
3. $A \imp \bk A$. \hfill (co-reflection)
% } 
% \end{enumerate}

\noindent\textbf{Rule} {\em Modus Ponens}

\end{definition}

\medskip
The next system we consider is {\em the logic of intuitionistic knowledge},  \iel\ (which is, at the same time, the {\em logic of provably consistent intuitionistic beliefs}): 
\medskip\par
\begin{definition}[\iel]
$ \iel\ = \ielm\ + 
%  \begin{enumerate}
%  \item[\emph{4}.] 
\bk A \imp \neg \neg A$%[
%] 
\hfill {\em (intuitionistic reflection)}
% \end{enumerate}
\end{definition}

Immediately from these definitions, we conclude 
\[ \ielm\ \subseteq\ \iel. 
\]
From model-theoretical considerations in section~\ref{sec:mod} it follows that this inclusion is strict (Theorem \ref{th:strictincl}). 

\begin{proposition}\label{thm:basicprop} 
In $\mathcal{L} \in \{\ielm, \iel\}$
  \begin{enumerate}[noitemsep]
   \item The rule of \bk-\emph{necessitation}, $\proves A\ \Rightarrow\ \proves \bk A$, is derivable. 

  \item The deduction theorem holds.

  \item Uniform substitution holds.

  \item $\mathcal{L}$ is a normal intuitionistic modal logic.%[
\footnote{See \cite{Bozic1984, Bozic1985}.} %]

  \item Positive and negative introspection hold; $\ \proves \bk P \imp \bk\bk P$, $\ \proves \neg\bk P \imp \bk\neg\bk P$.
  
  \end{enumerate}
\end{proposition}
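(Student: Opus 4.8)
The plan is to dispatch each of the five items in turn, exploiting the fact that $\mathcal{L}$ is axiomatized as a Hilbert system whose axioms are all schemes and whose sole inference rule is \emph{Modus Ponens}. The overall strategy is that items 1, 3, and 5 are near-immediate from co-reflection and the schematic nature of the axioms; item 2 rests on the standard Hilbert-style argument, which is available precisely because $\bk$-necessitation is \emph{not} a primitive rule; and item 4 is a bookkeeping combination of the preceding items against the definition of a normal intuitionistic modal logic.

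First I would establish item 1 ($\bk$-necessitation): given $\proves A$, co-reflection supplies $\proves A \imp \bk A$ as an axiom instance, and one application of \emph{Modus Ponens} yields $\proves \bk A$. Next, for item 5, I would observe that positive and negative introspection are themselves \emph{instances} of co-reflection, obtained by substituting $\bk P$ and $\neg\bk P$ respectively for $A$ in the scheme $A \imp \bk A$; no separate argument is needed. For item 3 (uniform substitution) I would argue by induction on the length of a derivation: every axiom is given by a scheme, so its substitution instances are again axioms, and \emph{Modus Ponens} commutes with substitution (substitution instances of $A$ and of $A \imp B$ yield the corresponding instance of $B$); hence every substitution instance of a theorem is a theorem.

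Item 2 (the deduction theorem) I would then prove by the classical Hilbert induction on derivations from a hypothesis set together with $A$, using the \ipc\ axioms $B \imp (C \imp B)$ and $(B \imp (C \imp D)) \imp ((B \imp C) \imp (B \imp D))$ to push the discharged hypothesis $A$ through each inference. The one point demanding care — and the step I would flag as the conceptual crux — is that the deduction theorem is valid here \emph{only because} $\bk$-necessitation is a derived rather than a primitive rule. Had necessitation been taken as basic, the naive deduction theorem would fail (one would have $A \proves \bk A$ without a uniform route to $\proves A \imp \bk A$ over arbitrary hypothesis sets); since the sole rule is \emph{Modus Ponens}, the induction meets no rule other than the one the \ipc\ axioms are tailored to handle, and the theorem goes through unchanged.

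Finally, item 4 collects the pieces: $\mathcal{L}$ contains \ipc\ and the distribution axiom by definition, and by items 1 and 3 it is closed under \emph{Modus Ponens}, uniform substitution, and $\bk$-necessitation — exactly the closure conditions defining a normal intuitionistic modal logic in the sense of Bo\v{z}i\'c--Do\v{s}en. Thus no item presents a genuine obstacle; the only subtlety worth stressing in the writeup is the dependence of item 2 on the primitivity of \emph{Modus Ponens} alone, which is what keeps the deduction theorem in its unqualified form.
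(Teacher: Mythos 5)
Your proof is correct and follows essentially the same route as the paper's: items 1 and 5 are exactly the paper's use of co-reflection (with $\bk P$ and $\neg\bk P$ substituted), item 3 is the same induction argument, and items 2 and 4 rest on the same observations the paper cites (the deduction theorem reduces to the \ipc\ case because the only primitive rule is \emph{Modus Ponens}, and normality follows from closure under substitution and necessitation). One minor caveat: your parenthetical claim that taking necessitation as a primitive rule would break the deduction theorem is not quite right for \emph{this} system, since co-reflection itself provides the uniform passage from $A \proves \bk A$ to $\proves A \imp \bk A$; but this aside plays no role in your actual argument.
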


\begin{proof} \

\begin{enumerate}[noitemsep]
  \item By co-reflection.

  \item From 1, and the fact that intuitionistic propositional logic validates the deduction theorem.

  \item By induction on the complexity of formulas.

  \item From 3 $\mathcal{L}$ is closed under substitution for propositional variables.

  \item Both are instances of axiom $A \imp \bk A$, with $\bk P$ and $\neg\bk P$ for $A$ respectively.

\end{enumerate}  
\end{proof}

\begin{proposition}\label{th:distconj}
  For $\mathcal{L} \in \{ \ielm, \iel \}$ \[ \mathcal{L} \proves \bk(A \land B) \bimp (\bk A \land \bk B). \]
\end{proposition}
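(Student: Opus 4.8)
The plan is to prove the two implications of the biconditional separately, using only the normal-modal machinery already in hand: the distribution axiom, $\bk$-necessitation (Proposition~\ref{thm:basicprop}, part~1), and the deduction theorem. Notably, neither co-reflection nor intuitionistic reflection is needed beyond their role in establishing necessitation, so the argument runs uniformly for both \ielm\ and \iel.

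For the forward direction $\bk(A \land B) \imp (\bk A \land \bk B)$, I would start from the intuitionistic tautologies $A \land B \imp A$ and $A \land B \imp B$. Applying necessitation gives $\bk(A \land B \imp A)$ and $\bk(A \land B \imp B)$; feeding each into the distribution axiom yields $\bk(A \land B) \imp \bk A$ and $\bk(A \land B) \imp \bk B$. Conjoining these by an intuitionistically valid propositional step (instantiating the schema $(p \imp q) \imp ((p \imp r) \imp (p \imp (q \land r)))$) produces $\bk(A \land B) \imp (\bk A \land \bk B)$.

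For the converse $(\bk A \land \bk B) \imp \bk(A \land B)$, I would begin with the intuitionistic tautology $A \imp (B \imp (A \land B))$, apply necessitation, and then chain the distribution axiom twice: first to obtain $\bk A \imp \bk(B \imp (A \land B))$, and then $\bk(B \imp (A \land B)) \imp (\bk B \imp \bk(A \land B))$. Composing these gives $\bk A \imp (\bk B \imp \bk(A \land B))$, which by intuitionistically valid currying is equivalent to the desired $(\bk A \land \bk B) \imp \bk(A \land B)$. The biconditional then follows by conjoining the two implications.

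There is no substantive obstacle here: this is the textbook derivation of the conjunction law for a normal modal logic, and the only point requiring a moment's care is verifying that each propositional schema invoked --- $A \land B \imp A$, $A \land B \imp B$, $A \imp (B \imp (A \land B))$, together with the conjunction-introduction and currying manipulations --- is intuitionistically valid, which they all are. Since none of these steps appeals to any classically-specific principle, the result holds in \ielm\ and a fortiori in \iel.
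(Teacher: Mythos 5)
Your proof is correct and is exactly the argument the paper has in mind: the paper's own proof simply states that ``the standard derivation of this fact uses distribution and necessitation,'' and your proposal spells out that standard derivation in full, using the same ingredients (intuitionistic conjunction tautologies, $\bk$-necessitation from Proposition~\ref{thm:basicprop}, and the distribution axiom). No discrepancy to report.
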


\begin{proof} \ The standard derivation of this fact uses distribution and necessitation, both present in ${\mathcal L}$. 

\end{proof}

\begin{theorem}[Truth Condition]\label{th:ielth}
\iel\ proves

\end{theorem}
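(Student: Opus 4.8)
I read the statement as the claim that \iel\ derives each of the four formulations of the truth condition collected earlier, namely $\neg A \imp \neg\bk A$, $\neg(\bk A \land \neg A)$, $\neg\bk\bot$, and $\neg\neg(\bk A \imp A)$. The plan is to obtain all four directly from the characteristic axiom of \iel, intuitionistic reflection $\bk A \imp \neg\neg A$, using only reasoning available in \ipc\ together with the distribution axiom and necessitation supplied by Proposition~\ref{thm:basicprop}.

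First I would dispatch the three that reduce to pure \ipc-manipulation of intuitionistic reflection. For $\neg(\bk A \land \neg A)$, I would observe that it is \ipc-equivalent to $\bk A \imp (\neg A \imp \bot)$, i.e.\ to $\bk A \imp \neg\neg A$ itself by currying, so it is immediate. For $\neg A \imp \neg\bk A$, I would contrapose intuitionistic reflection to get $\neg\neg\neg A \imp \neg\bk A$ and then invoke the \ipc-law $\neg A \bimp \neg\neg\neg A$. For $\neg\bk\bot$, I would instantiate $A := \bot$ in intuitionistic reflection to obtain $\bk\bot \imp \neg\neg\bot$ and collapse $\neg\neg\bot$ to $\bot$ via $\vdash_{\ipc}\neg\neg\bot\bimp\bot$, yielding $\bk\bot \imp \bot$.

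The hard part will be $\neg\neg(\bk A \imp A)$, since here the double negation guards an implication rather than an atom and so does not fall out of a one-line substitution. I would argue inside \ipc\ under the assumption $\neg(\bk A \imp A)$, aiming at $\bot$. From that assumption I extract $\neg A$ (because $A$ would give $\bk A \imp A$ by weakening) and $\neg\neg\bk A$ (because $\neg\bk A$ would give $\bk A \imp A$ by ex falso). The already-established principle $\neg A \imp \neg\bk A$ then turns $\neg A$ into $\neg\bk A$, contradicting $\neg\neg\bk A$; discharging the assumption gives $\neg\neg(\bk A \imp A)$. An equivalent route is to first prove $\neg\neg\bk A \imp \neg\neg A$ by $\neg\neg$-monotonicity from intuitionistic reflection (using $\neg\neg\neg\neg A \bimp \neg\neg A$) and then apply the \ipc-valid schema $(\neg\neg B \imp \neg\neg C) \imp \neg\neg(B \imp C)$.

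I would finally remark that co-reflection is not needed for any of these forward derivations: all four flow from intuitionistic reflection alone. Co-reflection (and distribution) become essential only for the converse implications that make 1--4 and intuitionistic reflection mutually equivalent over \ielm; e.g.\ recovering $\bk A \imp \neg\neg A$ from $\neg\bk\bot$ proceeds by applying co-reflection to $\neg A$ to obtain $\bk\neg A$, distributing to $\bk A \imp \bk\bot$, and contradicting $\neg\bk\bot$. That equivalence, however, lies beyond what the present statement requires.
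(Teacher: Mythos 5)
Your proposal is correct and takes essentially the same approach as the paper: parts 1--3 are derived exactly as in the paper's proof (instantiating intuitionistic reflection at $\bot$ and collapsing $\neg\neg\bot$, the currying/contradiction argument for $\neg(\bk A \land \neg A)$, and contraposition plus $\neg X \bimp \neg\neg\neg X$ for $\neg A \imp \neg\bk A$), while your ``equivalent route'' for $\neg\neg(\bk A \imp A)$ --- deriving $\neg\neg\bk A \imp \neg\neg A$ and applying the schema $(\neg\neg X \imp \neg\neg Y) \imp \neg\neg(X \imp Y)$ --- is precisely the paper's derivation of part 4, with your primary refutation argument under the assumption $\neg(\bk A \imp A)$ being a harmless variant. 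Your closing remark that co-reflection enters only in the converse direction (restoring intuitionistic reflection from $\neg\bk\bot$ over \ielm) likewise matches the paper's observation immediately following the theorem.
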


\begin{enumerate}[noitemsep]

   \item $\neg \bk \bot$;

   \item $ \neg(\bk A \land \neg A)$;

   \item $ \neg A \imp \neg \bk A$;

   \item $ \neg\neg(\bk A \imp A)$.

  \end{enumerate}

\begin{proof} $\ $
\par
\noindent
For 1:
\medskip\par
1. $\bk \bot \imp \neg \neg \bot$ - intuitionistic reflection;

2. $\neg\neg\bot \imp \bot$ - {\sf IPC} theorem;

3. $\neg \bk \bot$ - from 1 and 2.
\medskip\par\noindent
For 2:
\medskip\par
1. $\bk A \land \neg A$  - assumption;

2. $\neg\neg A \land \neg A$ - by co-reflection;

3. $\bot$ - from 2;

4. $\neg(\bk A \land \neg A)$. 

\medskip\par\noindent
For 3:
\medskip

1. $\neg\neg\neg A \imp \neg \bk A$ - contrapositive of intuitionistic reflection;

2. $\neg A \imp \neg \bk A$ - by $\neg X \bimp \neg\neg\neg X$.
\medskip\par\noindent

For 4, continue with: 
\medskip\par

3. $\neg\neg \bk A \imp \neg \neg A$ - contrapositive of 2;

4. $\neg\neg(\bk A \imp A)$ - by intuitionistic tautology $(\neg\neg X \imp \neg\neg Y) \bimp \neg\neg(X \imp Y)$.
\end{proof}

It is easy to check that \ielm\ with each of $\neg\bk\bot$, $\neg(\bk A \land \neg A)$, $\neg A \imp \neg \bk A$, $\neg\neg(\bk A \imp A)$,  as additional axioms is equivalent to \iel. 
Since each of these principles can be regarded as expressing the truth condition on knowledge, we see that the axiom $\bk A \imp \neg \neg A$ is an adequate intuitionistic expression of this idea.%[ 
\footnote{See also appendix \ref{sec:itcond}.}%]

Note that as a corollary of part 4 of the theorem above, and Glivenko's Theorem, the classical logic of knowledge {\sf S5} as well as logics of belief    \sft{K, D, KD4, KD45} can be Glivenko-embedded into \iel: the double negation of each theorem of these logics is derivable in \iel.
This embedding, however, is not faithful; obviously $\iel \proves \neg\neg(A \imp \bk A)$ but in none of the classical logics just mentioned is it the case that $\proves A \imp \bk A$.%[
\footnote{The same holds for the Kolmogorov embedding, see note \ref{ft:Kol}.} 
%]
This makes more precise the claim above that \iel\ offers a more general framework than the classical epistemic one; classical epistemic reasoning is sound in \iel, but the intuitionistic epistemic language is rather more expressive.%[
\footnote{Proof theory for \iel\ has been developed in \cite{Krupski2016}, which established cut-elimination theorems and demonstrated that it is PSPACE complete.}  
%]

\subsection{Intuitionistic Knowledge as Provably Consistent Belief}
\label{sec:cstbel}

Our analysis shows that in the intuitionistic propositional setting, knowledge and provably consistent belief are axiomatized by the same logical system, \iel. 

This situation is quite different from what we observe in classical epistemic logic. 
Indeed, in the classical setting there is a variety of systems for consistent belief: \textsf{D}, \textsf{KD4}, \textsf{KD45}, and systems for knowledge: \textsf{T}, \textsf{S4}, \textsf{S5}, that reflect different shades of belief and knowledge. 
However, similar axiom systems based on intuitionistic logic with the co-reflection principle \[ A\imp \bk A, \]
are all equivalent to \iel.\footnote{Given that the classical truth condition in \textsf{T}, \textsf{S4}, and \textsf{S5} is formulated by intuitionistic reflection $\bk A\imp \neg\neg A$, rather than classical reflection $\bk A\imp A$.} %]
Does this mean that intuitionistic knowledge is just provably consistent belief? 
Not necessarily. However, it does mean that the basic intuitionistic epistemic logic \iel\ does not distinguish intuitionistic knowledge from intuitionistic provably consistent belief, just like the classical epistemic logic {\sf S5} does not distinguish knowledge from true belief.

\subsection{$\bk$ as $\neg\neg$}

\cite{Dosen1984} proposes an intuitionistic modal logic, \sft{Hdn$\bx$}, in which $\bx$ is read as intuitionistic $\neg\neg$, i.e.\ 
$$\bx A \bimp \neg\neg A.$$ 
\sft{Hdn$\bx$} validates $A \imp \bx A$ and invalidates $\bx A \imp A$. 
Could Do\v{s}en's $\bx$ be an intuitionistic epistemic operator?

We argue not. If it were it would follow that all classical theorems are known intuitionistically. 
By Glivenko's Theorem,  if ${\sf CPC} \proves A$ then \sft{Hdn$\bx$} $\proves \bx A$. 
Such a $\bx$ is not intuitionistic knowledge but rather a simulation of classical knowledge within \ipc.

Technically speaking, Do\v{s}en's modality $\neg\neg$ is strictly weaker than $\bk$: \iel\ proves $\bk A\imp\neg\neg A$ whereas $\neg\neg A\imp \bk A$ is not valid (e.g.\ when $A$ is the law of excluded middle%[
\footnote{Hence the classical truth of a proposition does not imply that it is verified.}%]
). 
Furthermore, $\sft{Hdn$\bx$}\proves \bx(X \imp Y) \bimp (\bx X \imp \bx Y)$ but neither of our systems have $\bk(X \imp Y) \bimp (\bk X \imp \bk Y)$. 

As a formal logical system,  $\sft{Hdn}\bx$ strictly extends $\iel$.

\subsection{Provability Semantics}
\label{sec:embed}

G\"odel, in \cite{Godel1933}, offered a provability semantics for intuitionistic logic via a syntactical embedding of \sft{IPC} into the classical modal logic \siv, which he considered a calculus for classical provability.
By extending \siv\ with a verification modality, $\bv$, and specifying an appropriate translation, we can explain each of our systems \ielm\ and \iel\ in the way G\"odel explained intuitionistic logic by interpreting them in the logic of provability \siv. 

Let \sivv$^-$ be the classical bi-modal logic with the axioms and rules of \siv\ for $\bx$, and the axioms and rules of modal logic \sft{K} for $\bf V$, along with the additional axiom $\bx A \imp {\bf V} A$. 
Let \sivv\ be \sivv$^-$ + $\neg \Box\bv \bot$.
By design, \sivv$^-$ may be regarded as the basic logic of verification, and \sivv\ as the basic logic of consistent verification.
The G\"odel translation \[ \mbox{\emph{$tr(A)\ =$ `box every subformula of $A$'}} \]  yields an embedding of $\mathcal{L} \in \{\ielm, \iel \}$ into $\mathcal{L}_\bx \in \{\sivv^{-}, \sivv \}$: 
\[ \mathcal{L} \proves A \ \Rightarrow \ \mathcal{L}_\bx \proves tr(A).%[
\footnote{It follows from the results of \cite{Artemov2014,Protopopescu2015} that this embedding is faithful for $\mathcal{L} = \iel^-, \iel$.
Note that there a stronger version of \sivv\ was given with $\neg \bv \bot$ instead of $\neg\Box\bv\bot$; $\neg\bx\bv\bot$ enables an extension of the arithmetical semantics for the Logic of Proofs \cite{Artemov2001,Artemov2005a} to be given for \iel, , see \cite{Protopopescu2016arxiv}.}%]
\]

The G\"odel translation interprets $\bk p$ as $\Box{\bf V}\Box p$, hence the verification of $p$ in \sivv\ is rather the checking of the provability of $p$. 
The ideology behind our systems also allows a more direct reading of verification under which $\bk p$ constructively verifies $p$ itself rather than `{\em $p$ is provable}', (\ref{second}) above. 
This reading can be captured by the translation of $\bk p$ as $\Box{\bf V}p$ which can be handled by the bimodal logic of constructive verification, for $\mathcal{L}_\bx \in \{ \sivv^{-}, \sivv \}$ \[\mathcal{L}_{\bx}^\prime = \mathcal{L}_\bx + ({\bf V}p\imp{\bf V}\Box p).\] 
We leave this line of research to future studies (see \cite{Protopopescu2015}). 

\section{Models for Intuitionistic Epistemic Logic}\label{sec:mod}

\begin{definition}[\ielm-model]\label{df:ielmmod}
A model for \ielm\ is a quadruple $\langle W, R, \forces, E\rangle$ such that 

\begin{enumerate}
 \item $\langle W,R,\forces\rangle$ is an intuitionistic model: $\langle W,R\rangle$ is a non-empty partial order ($R$ is a `cognition' binary relation on $W$), $\forces$ is a monotonic evaluation of propositional letters in $W$; 
 \item $E$ is a binary `knowledge' relation on $W$ coordinated with the `cognition' relation $R$: 
  \begin{itemize}

    \item $E(u) \subseteq R(u)$ for any state $u$;%[
    \footnote{Let $R(u)$ and $E(u)$ denote the $R$-successors and the $E$-successors, respectively, of some state $u$.}%]

    \item $uRv$ yields $E(v) \subseteq E(u)$;

  \end{itemize}

\item $\forces$ is extended to epistemic assertions as
\begin{itemize}
 \item $u\forces \bk A\ \ $ iff $\ \ v \forces A$ for all $v \in E(u)$.
\end{itemize}
\end{enumerate}

A formula $A$ is true in a model, if $A$ holds at each world of this model.
$\ielm\forces A$, or $\forces A$ for short, means that $A$ holds in each \ielm-model. 

\end{definition}

In Kripke model-theoretic terms the intuitionistic truth of $A$ is represented as the impossibility of a situation in which $A$ does not hold. 
To represent $\bk$ in the same model-theoretic terms we suggest the following: in a given world $u$, there is an `audit' set of possible worlds $E(u)$, the set of states $E$-accessible from $u$, in which verifications could possibly occur. 
An $R$-successor of a state $u$ can be thought of as an `in principle (logically) possible' cognition state given $u$, and an $E$-successor can be thought of as a `possible' state of verification.  
Belief, hence, is `truth in any audit world', i.e.\ no matter when and how an audit occurs, it should confirm $A$. 

Note that $E(u)$ does not necessarily contain $u$, hence the truth of $\bk A$ at $u$ does not guarantee that $A$ holds at $u$. 
Therefore, $\bk A \imp A$ does not necessarily hold. 
In the extreme $E(u)$ can coincide with $R(u)$, in which case $\bk A \imp A$ would hold.
Furthermore, the condition $E\subseteq R$ coupled with the monotonicity of truth w.r.t.\ $R$ ensures the validity of $A \imp \bk A$.

As for intuitionistic logic, we can think of \ielm-models as representing the states of information of an ideal researcher. 
Audit sets are monotone with respect to intuitionistic accessibility $R$. 
This corresponds to the Kripkean ideology that $R$ denotes the discovery process, and that things become more and more certain in the process of discovery. 
As the set of intuitionistic possibilities, $R(u)$, shrink, audit sets, $E(u)$, shrink as well. 
The monotonicity of truth represents the idealization of the researcher's memory; once a proposition becomes true, its truth is retained forever.

\begin{definition}[\iel-model]\label{df:ielmod} An \iel-model is an \ielm-model as above with the additional condition that $E$ is non-empty:
\smallskip

\begin{enumerate}
\item[\textup{4.}] $E(u)\neq\emptyset$ for each $u\in W$.
\end{enumerate}

\end{definition}
That audits are consistent is reflected in condition (4): in \iel-models $\neg \bk \bot$ holds. 
Indeed, for each world $u$, $E(u)\neq\emptyset$, then there is a $v\in E(u)$. Since $v\not\forces\bot$, $u\not\forces \bk \bot$ for each $u$, hence $w\forces\neg\bk \bot$  for each $w$. 

Again, note that $E(u)$ need not contain $u$, hence reflection is not guaranteed to hold. 

Note that the truth of $\bk A$ at $u$ in an \iel-model does guarantee that $A$ is true at some $v \in R(u)$, since $E(u) \subseteq R(u)$ and $E(u)\neq\emptyset$, this guarantees $\neg\neg A$ is true at $u$ also (cf. Theorem~\ref{th:sound}); this illustrates our earlier comment that $\bk A$ establishes the possibility of the intuitionistic truth of $A$.

In the limit case where $R(u)=\{u\}$, the audit set $E(u)$ is also $\{u\}$, and hence coincides with $R(u)$, i.e.\ `leaf nodes' are $E$-reflexive.  
Note that at `leaf nodes', intuitionistic evaluation behaves classically; at such a $u$, $u \forces \bk A \imp A$ for all $A$'s. In the epistemic case, at `leaf worlds' the classical factivity of $\bk$ holds.

\begin{lemma}[Monotonicity]\label{lm:mon}
 For each model and a formula $A$, if $u \forces A$ and $uRv$ then $v \forces A$.
\end{lemma}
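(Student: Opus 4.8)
The plan is to argue by induction on the structure of the formula $A$, showing that each semantic clause propagates truth upward along $R$. Fix a model $\langle W, R, \forces, E\rangle$ together with worlds $u, v$ satisfying $uRv$, and assume $u \forces A$. For the base case $A$ is a propositional letter, and the desired conclusion is exactly the requirement, built into Definition \ref{df:ielmmod}, that $\forces$ be a monotonic evaluation of propositional letters. The cases $A = B \land C$ and $A = B \lor C$ follow immediately from the induction hypothesis applied to the immediate subformulas, since their truth clauses are evaluated pointwise at the world in question.

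The implication case $A = B \imp C$ is where the order structure of $R$ enters, but it does not use the induction hypothesis. By definition $u \forces B \imp C$ means that for every $w$ with $uRw$, if $w \forces B$ then $w \forces C$. To establish $v \forces B \imp C$, take any $w$ with $vRw$; by transitivity of $R$ (which holds since $R$ is a partial order) we have $uRw$, so the condition at $u$ applies and gives $w \forces C$ whenever $w \forces B$. The negation case $A = \neg B$ requires no separate treatment, being the special instance $B \imp \bot$.

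The only genuinely modal case is $A = \bk B$, and here the crux is the coordination condition $uRv \Rightarrow E(v) \subseteq E(u)$, i.e.\ the shrinking of audit sets as one moves forward along $R$. Suppose $u \forces \bk B$, which by the epistemic clause means $w \forces B$ for every $w \in E(u)$. Since $uRv$ yields $E(v) \subseteq E(u)$, every $w \in E(v)$ is also in $E(u)$, hence $w \forces B$; therefore $v \forces \bk B$. This step does not invoke the induction hypothesis at all: it is purely a consequence of the coordination between $E$ and $R$.

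I expect the modal case to be the conceptual crux, as it is the one place where the specifically epistemic structure of the model does the work, whereas the remaining cases are the routine intuitionistic ones relying on transitivity of $R$ and monotonicity of atoms. It is worth noting that the same argument serves for both \ielm- and \iel-models, since the extra non-emptiness requirement $E(u) \neq \emptyset$ plays no role in the propagation of truth.
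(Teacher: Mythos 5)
Your proof is correct and follows essentially the same route as the paper's: an induction on the structure of $A$ in which the propositional cases are routine and the $\bk$ case is settled by the coordination condition $uRv \Rightarrow E(v) \subseteq E(u)$, exactly as the paper does (the paper merely states the propositional cases are standard and details only $\bk$). Your additional observations --- that the implication case rests on transitivity of $R$ rather than the induction hypothesis, and that the argument covers \iel-models since non-emptiness of $E(u)$ is never used --- are both accurate and consistent with the paper's remark that it suffices to check \ielm-models.
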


\begin{proof}
It suffices to check \ielm-models. 
Monotonicity holds for the propositional connectives, we show this just for $\bk$. 
Assume $u \forces \bk A$, then $x \forces A$ for each $x \in E(u)$. Take an arbitrary $v$ such that $uRv$ and arbitrary $w\in E(v)$, hence $w \in E(u)$. 
Therefore, $w \forces A$ and hence $v \forces \bk A$.

\end{proof}

\begin{theorem}[Soundness]\label{th:sound}
 For $\mathcal{L}\in\{\ielm,\iel \}$, if $\mathcal{L}\proves A$ then $\mathcal{L} \forces A$.
\end{theorem}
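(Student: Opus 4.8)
The plan is to argue by induction on the length of a derivation of $A$ in $\mathcal{L}$, showing that every theorem is true at every world of every $\mathcal{L}$-model. For this it suffices to verify two things: that each axiom of $\mathcal{L}$ is valid in the appropriate class of models, and that \emph{modus ponens} preserves validity. Since every \iel-model is in particular an \ielm-model, the validity of the common axioms needs to be checked only once, over the more general class; the only additional work for \iel\ is the single axiom of intuitionistic reflection, which must be checked over the restricted class where $E(u)\neq\emptyset$.

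For the axioms I would proceed as follows. The intuitionistic propositional axioms are valid in the underlying intuitionistic frame $\langle W,R,\forces\rangle$ by the standard soundness of \ipc\ with respect to Kripke semantics, where Monotonicity (Lemma~\ref{lm:mon}) underwrites the persistence of truth. For distribution $\bk(A\imp B)\imp(\bk A\imp\bk B)$, I would fix a world and, using the intuitionistic clause for implication together with the reflexivity of $R$, reduce the claim to the observation that at any world $x$ one has that $x\forces A\imp B$ and $x\forces A$ imply $x\forces B$; applying this at each $x\in E(w)$ for an arbitrary $R$-successor $w$ then yields $w\forces\bk B$. Co-reflection $A\imp\bk A$ is where the condition $E(u)\subseteq R(u)$ enters: if $v\forces A$ then every $w\in E(v)$ is an $R$-successor of $v$, so $w\forces A$ by Monotonicity, giving $v\forces\bk A$.

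The one genuinely delicate verification is intuitionistic reflection $\bk A\imp\neg\neg A$ for \iel, and I expect this to be the main point requiring care. Here one must unwind the semantics of the double negation: $v\forces\neg\neg A$ holds exactly when every $R$-successor of $v$ has some further $R$-successor forcing $A$. Assuming $v\forces\bk A$, I would take an arbitrary $w$ with $vRw$, note $w\forces\bk A$ by Monotonicity, and then invoke the \iel-condition $E(w)\neq\emptyset$ to produce an $x\in E(w)\subseteq R(w)$; since $x\forces A$ by the clause for $\bk$, this exhibits the required witness and establishes $v\forces\neg\neg A$. The essential ingredients are thus the non-emptiness of the audit sets together with their inclusion in $R$.

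Finally, for the rule I would check that \emph{modus ponens} preserves truth at every world: if $A$ and $A\imp B$ are true throughout a model, then at any world $u$ the reflexivity of $R$ and the implication clause give $u\forces B$. Combining the base cases with the inductive step completes the argument uniformly for both systems.
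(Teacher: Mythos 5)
Your proof is correct and follows essentially the same route as the paper's: induction on derivations, with distribution handled via the clause for $\bk$ at worlds in $E(u)$, co-reflection via $E(u)\subseteq R(u)$ plus Monotonicity, and intuitionistic reflection via Monotonicity together with $E(w)\neq\emptyset$ and $E(w)\subseteq R(w)$ to produce a witness refuting $\neg A$ at each $R$-successor. The only difference is that you spell out the standard steps the paper leaves implicit (validity of the \ipc\ axioms and preservation under \emph{modus ponens}), which is fine.
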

\begin{proof} 
By induction on derivations in \sft{IEL}. We check the epistemic clauses only.
\medskip\par
1) $\bk(A \imp B) \imp (\bk A \imp \bk B)$ for \ielm-models. It suffices to check that $u \forces \bk(A \imp B)$ and $u \forces \bk A$ yield $u \forces \bk B$. Assume $u \forces \bk(A \imp B)$ and $u \forces \bk A$, then for all $v\in E(u)$, $v \forces A \imp B$ and $v \forces A$, hence $v \forces B$. By definition, this means that $u \forces \bk B$. 
\medskip\par
2) $A \imp \bk A$ for \ielm-models. Assume $u \forces A$. By monotonicity, for all $v\in R(u)$, $v \forces A$. Since $E(u) \subseteq R(u)$, for any $w\in E(u)$, $w \forces A$, but then $u \forces \bk A$ as well.
 \medskip\par 
3) $\bk A \imp \neg \neg A$ for \iel-models. 
Assume $u \forces \bk A$. By monotonicity, for each $v\in R(u)$, $v\forces \bk A$ as well. Pick an arbitrary $v\in R(u)$; it suffices now to show that $v\not\forces\neg A$. Since $E(v)\neq\emptyset$, there is $w\in E(v)\subseteq E(u)\subseteq R(u)$, and, by definitions, $w\forces A$. This yields that $v\not\forces\neg A$, hence $u\forces\neg\neg A$.
\end{proof}

\begin{theorem}\label{th:strictincl}
$\ielm \subset \iel$.
\end{theorem}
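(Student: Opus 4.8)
The plan is to strengthen the inclusion $\ielm \subseteq \iel$, which is immediate from the definitions, to a proper one by exhibiting a single formula provable in \iel\ but not in \ielm. The natural witness is $\neg\bk\bot$: by Theorem~\ref{th:ielth}(1) we already have $\iel \proves \neg\bk\bot$, so it remains only to show $\ielm \nproves \neg\bk\bot$. I would establish the latter by the contrapositive of soundness (Theorem~\ref{th:sound}): since \ielm\ is sound with respect to the class of all \ielm-models, it suffices to produce a single \ielm-model in which $\neg\bk\bot$ fails.

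First I would construct the countermodel. Take $W = \{u\}$ with $R = \{(u,u)\}$, which is trivially a non-empty partial order, the empty forcing on propositional letters, and the empty audit set $E(u) = \emptyset$. I would then check that this is a legitimate \ielm-model: the coordination condition $E(u) \subseteq R(u)$ holds because $\emptyset \subseteq R(u)$, and the monotonicity condition ``$uRv$ yields $E(v) \subseteq E(u)$'' holds trivially since the only pair is $uRu$ and $E(u) \subseteq E(u)$. This is precisely the place where the difference between the two logics surfaces: the model violates condition (4) of Definition~\ref{df:ielmod}, namely $E(u) \neq \emptyset$, so it is an \ielm-model but not an \iel-model, which is exactly what we want.

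Next I would evaluate $\bk\bot$ at $u$. Because $E(u) = \emptyset$, the clause ``$u \forces \bk A$ iff $v \forces A$ for all $v \in E(u)$'' is satisfied vacuously when $A = \bot$, so $u \forces \bk\bot$. Since $u \not\forces \bot$, it follows that $u \not\forces \bk\bot \imp \bot$, i.e.\ $u \not\forces \neg\bk\bot$. Hence $\neg\bk\bot$ is not valid in this \ielm-model, and by soundness $\ielm \nproves \neg\bk\bot$. Combined with $\iel \proves \neg\bk\bot$, this yields $\ielm \subset \iel$.

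There is essentially no serious obstacle here; the argument is a routine one-directional use of soundness. The only points requiring care are the direction in which soundness is deployed — falsifying the formula in \emph{some} model of the weaker system's model class, rather than validating it in every model — together with verifying that the chosen countermodel genuinely satisfies all the structural conditions of an \ielm-model while violating precisely the non-emptiness condition that distinguishes \iel-models.
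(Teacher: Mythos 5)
Your proof is correct and follows essentially the same route as the paper: the paper uses the identical countermodel (a singleton world with reflexive $R$ and empty $E$) together with soundness of \ielm, the only difference being that the paper falsifies the axiom $\bk A \imp \neg\neg A$ at that world (since $\bk A$ holds vacuously while $\neg\neg A$ fails), whereas you falsify the \iel-theorem $\neg\bk\bot$. Both witnesses work equally well, so this is an inessential variation of the paper's argument.
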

\begin{proof} \
$\ielm \neq \iel$. Consider the following \ielm-model $\mathcal{M}_1$: $W$ is a singleton, $R$ is reflexive and $E$ is empty. 

\begin{figure}[H]
\centering\mbox{

\begin{xy}
(-15,4)*{1};
%(15,4)*{2};
%(-15,-4)*{\emph{p}};
(-15,0)*+{\bullet}="B"; % 1
%(15,0)*+{\circ}="C"; % 2
%{\ar_R "B";"B"};
%{\ar@/^/^E@{.>} "B";"C"};
{\ar@{->}@(ul,ur)^R "B";"B"};
%{\ar@/_/_R "B"; "B"};

\end{xy}

}
\caption{\ielm-model $\mathcal{M}_1$}
\end{figure}
\noindent
Since $E(1)=\emptyset$, $1 \forces \bk A$, but since $1 \nforces A$, $1 \forces \neg A$ hence $1 \nforces \neg \neg A$.

\end{proof}

\begin{theorem}[Completeness]
For $\mathcal{L}\in\{ \ielm,\iel \}$, if $\mathcal{L}\forces A$ then $\mathcal{L} \proves A$.
\end{theorem}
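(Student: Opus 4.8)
The plan is to prove completeness by contraposition, via a canonical model built from prime theories, in the standard style for intuitionistic modal logics. Fix $\mathcal{L}\in\{\ielm,\iel\}$ and assume $\mathcal{L}\nproves A$; I will construct an $\mathcal{L}$-model in which $A$ fails at some world. The worlds are the prime $\mathcal{L}$-theories: sets $\gg$ of formulas that are deductively closed ($\gg\proves B$ implies $B\in\gg$), consistent ($\gg\nproves\bot$), and have the disjunction property ($B\vee C\in\gg$ implies $B\in\gg$ or $C\in\gg$). The cognition relation is inclusion, $\gg R\gd$ iff $\gg\subseteq\gd$, which is automatically a partial order; the evaluation is $\gg\forces p$ iff $p\in\gg$ on propositional letters, which is monotone by construction. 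The knowledge relation is $\gg E\gd$ iff $\{\,C:\bk C\in\gg\,\}\subseteq\gd$, so that everything known at $\gg$ holds at each $E$-successor.

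The first technical ingredient is the Lindenbaum-style saturation lemma: if $\gg\nproves B$ then $\gg$ extends to a prime theory $\gd\supseteq\gg$ with $B\notin\gd$. Here intuitionistic (rather than classical) bookkeeping is required — one enumerates disjunctions and commits to a disjunct whenever doing so preserves $\nproves B$ — and this lemma is invoked both to populate the model and to produce witnesses in the truth lemma. The deduction theorem (Proposition~\ref{thm:basicprop}) is used freely in its proof.

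Next I would check that the canonical frame is genuinely an $\mathcal{L}$-model. The inclusion $E(\gg)\subseteq R(\gg)$ uses co-reflection: if $\gg E\gd$ and $B\in\gg$, then $\bk B\in\gg$ by $A\imp\bk A$ and closure, so $B\in\gd$, giving $\gg\subseteq\gd$. Monotonicity of audit sets ($\gg R\gd$ yields $E(\gd)\subseteq E(\gg)$) is immediate from the definition of $E$. For the \iel\ case I must additionally verify $E(\gg)\neq\emptyset$, i.e.\ that the set $\{\,C:\bk C\in\gg\,\}$ is consistent. If it proved $\bot$, then finitely many $B_1,\dots,B_n$ with each $\bk B_i\in\gg$ satisfy $B_1\wedge\cdots\wedge B_n\proves\bot$; necessitation and distribution then yield $\bk(B_1\wedge\cdots\wedge B_n)\imp\bk\bot$, while $\bk(B_1\wedge\cdots\wedge B_n)\in\gg$ by Proposition~\ref{th:distconj}, forcing $\bk\bot\in\gg$ and contradicting $\neg\bk\bot\in\gg$ (Theorem~\ref{th:ielth}). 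So the set is consistent and saturation supplies a prime $E$-successor.

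The heart is the truth lemma: $\gg\forces B$ iff $B\in\gg$, for every prime theory $\gg$ and formula $B$, proved by induction on $B$. The propositional cases are exactly as for \ipc, with the implication case invoking saturation. For the modality: if $\bk B\in\gg$ then every $\gd\in E(\gg)$ contains $B$, so $\gd\forces B$ by the induction hypothesis and hence $\gg\forces\bk B$; conversely, if $\bk B\notin\gg$ then $\{\,C:\bk C\in\gg\,\}\nproves B$ — otherwise the same conjunction-plus-distribution argument would give $\bk B\in\gg$ — so saturation yields a prime $\gd\supseteq\{\,C:\bk C\in\gg\,\}$ with $B\notin\gd$, i.e.\ $\gd\in E(\gg)$ with $\gd\not\forces B$, whence $\gg\not\forces\bk B$. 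Finally, since $\mathcal{L}\nproves A$, the set of theorems does not contain $A$, so saturation gives a prime theory $\gg_0$ with $A\notin\gg_0$; by the truth lemma $\gg_0\not\forces A$ in the canonical $\mathcal{L}$-model, so $\mathcal{L}\nforces A$. The main obstacle is getting the saturation lemma stated and proved in an intuitionistically correct way, together with the conjunction-distribution argument that ties $\{\,C:\bk C\in\gg\,\}\proves B$ back to $\bk B\in\gg$; the remaining frame-condition verifications are routine.
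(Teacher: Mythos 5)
Your proposal is correct and follows essentially the same route as the paper's own proof: a canonical model whose worlds are consistent prime theories, with $R$ as inclusion, $\gg E\gd$ iff $\gg_\bk\subseteq\gd$, a Lindenbaum-style saturation lemma, co-reflection used to verify $E\subseteq R$, consistency of $\gg_\bk$ (via necessitation and distribution) used to verify $E(\gg)\neq\emptyset$ in the \iel\ case, and the same truth lemma with the same two directions for the $\bk$ case. The only cosmetic difference is that you close the nonemptiness argument by contradicting $\neg\bk\bot\in\gg$, whereas the paper uses $\iel\proves\bk\bot\imp\bot$ to conclude directly that $\gg$ is inconsistent.
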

\begin{proof}
See Appendix \ref{sec:comp}.
\end{proof}
\begin{theorem}\label{th:nfk}
For $\mathcal{L}\in\{\ielm,\iel \}$,
 $$\mathcal{L} \nvdash \bk A \imp A.$$
\end{theorem}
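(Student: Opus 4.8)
The plan is to prove the stronger assertion for $\iel$ and then transfer it to $\ielm$ for free. Since $\ielm \subseteq \iel$ (indeed $\ielm \subset \iel$ by Theorem~\ref{th:strictincl}), any formula that fails to be derivable in $\iel$ is \emph{a fortiori} not derivable in $\ielm$; so it suffices to show $\iel \nvdash \bk A \imp A$. By the Soundness Theorem (Theorem~\ref{th:sound}) this reduces to exhibiting a single \iel-model together with a world at which $\bk A \imp A$ fails. Taking $A$ to be a propositional variable $p$, I need a world $u$ with $u \forces \bk p$ but $u \nforces p$; by the truth clause for $\bk$ this calls for an audit set $E(u)$ that avoids $u$ yet forces $p$ throughout.

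First I would construct the following two-point model $\mathcal{M}$. Let $W = \{u,v\}$ with the partial order $R$ generated by $uRv$ (so $R = \{(u,u),(v,v),(u,v)\}$), set $E(u) = \{v\}$ and $E(v) = \{v\}$, and evaluate the variable $p$ by $u \nforces p$ and $v \forces p$.

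Next I would verify that $\mathcal{M}$ is a genuine \iel-model. Monotonicity of the evaluation is immediate, since the only nontrivial ascent is from $u$ to $v$ and $p$ is already true at $v$. The coordination conditions of Definition~\ref{df:ielmmod} hold: $E(u) = \{v\} \subseteq R(u)$ and $E(v) = \{v\} \subseteq R(v)$; and the hypothesis $uRv$ demands $E(v) \subseteq E(u)$, which is satisfied because $E(v) = E(u) = \{v\}$. Finally $E(u)$ and $E(v)$ are both nonempty, so condition~(4) of Definition~\ref{df:ielmod} holds and $\mathcal{M}$ is indeed an \iel-model.

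It then remains only to read off the failure of reflection. Since $E(u) = \{v\}$ and $v \forces p$, the clause for $\bk$ gives $u \forces \bk p$; but $u \nforces p$ by construction, so $u \nforces \bk p \imp p$. Thus $\bk p \imp p$ fails in this \iel-model, so by soundness $\iel \nvdash \bk p \imp p$, and hence $\iel \nvdash \bk A \imp A$; by $\ielm \subseteq \iel$ the same non-derivability holds for $\ielm$. The construction carries no genuine obstacle; the only point requiring care is the \emph{simultaneous} verification of the coordination and nonemptiness conditions, which is precisely what ensures the countermodel lives in the more restrictive class of \iel-models rather than merely in that of \ielm-models, and therefore refutes reflection in both systems at once.
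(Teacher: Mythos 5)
Your proof is correct and is essentially identical to the paper's: your two-point model is exactly the paper's countermodel $\mathcal{M}_2$ (worlds $1,2$ with $1R2$, $E(1)=E(2)=\{2\}$, $p$ true only at $2$), and both arguments conclude by soundness. The only cosmetic difference is that you transfer non-derivability to \ielm\ via the inclusion $\ielm \subseteq \iel$, while the paper simply exhibits the model once for both systems; the substance is the same.
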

\begin{proof}
Consider the following model: $1R2$, $R$ is reflexive (and vacuously transitive), $E(1) = E(2) =  \{ 2 \}$, $p$ is atomic and $2 \forces p$.
\medskip\par
\begin{figure}[H]
\centering\mbox{
\begin{xy}
(-15,4)*{1};
(15,4)*{2};
(15,-4)*{\emph{p}};
(-15,0)*+{\bullet}="B"; % 1
(15,0)*+{\bullet}="C"; % 2
{\ar_R "B";"C"};
{\ar@/^/^E@{.>} "B";"C"};
{\ar@{.>}@(ul,ur)^E "C";"C"};
%{\ar@/_/_R "B"; "C"};
\end{xy}
}
\caption{\iel-model $\mathcal{M}_2$}
\end{figure}
\noindent
Clearly, $1 \forces \bk p$ and $1 \nforces p$, hence $$1\not\forces\bk p\imp p.$$
\end{proof}

Model $\mathcal{M}_2$ exemplifies the point  that intuitionistic verification guarantees the possibility of intuitionistic truth (see section \ref{sec:knf}): 
$$1\forces\bk p\imp \neg\neg p.$$

In the logics of intuitionistic knowledge, though reflection does not hold generally, it does hold for negated formulas.

\begin{theorem}\label{th:fknref} 
$\iel \proves \bk \neg A \imp \neg A.$\footnote{It is easy to check that $\bk \neg A \imp \neg A$ could be used instead of $\bk A\imp \neg\neg A$ to axiomatize \iel.}
\end{theorem}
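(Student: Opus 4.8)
The plan is to derive this directly from intuitionistic reflection by a single substitution, combined with the standard intuitionistic collapse of triple negation to single negation. First I would instantiate the axiom $\bk A \imp \neg\neg A$ with $\neg A$ in place of $A$; this is legitimate because uniform substitution holds in \iel\ (Proposition \ref{thm:basicprop}, part 3). The resulting theorem is $\bk \neg A \imp \neg\neg\neg A$.

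Next I would invoke the intuitionistic propositional tautology $\neg\neg\neg A \imp \neg A$ (in fact $\ipc \proves \neg\neg\neg A \bimp \neg A$, the familiar fact that triple negation reduces to single negation). Since \iel\ contains all of \ipc\ and is closed under modus ponens, I can chain the two implications by hypothetical syllogism to obtain $\bk \neg A \imp \neg A$. Written out, the derivation is just three lines: (1) $\bk \neg A \imp \neg\neg\neg A$ by intuitionistic reflection; (2) $\neg\neg\neg A \imp \neg A$ as an \ipc\ theorem; (3) $\bk \neg A \imp \neg A$ from (1) and (2).

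There is essentially no obstacle here; the derivation is routine once one notices that the target is the $\neg A$-instance of reflection modulo the triple-negation identity. The only points worth flagging explicitly are that the substitution instance is permitted and that the collapse $\neg\neg\neg A \bimp \neg A$ is intuitionistically valid. I would also stress that this proof uses intuitionistic reflection in an essential way, so $\bk \neg A \imp \neg A$ is a theorem of \iel\ but not of \ielm; this is precisely why, as the accompanying footnote observes, $\bk \neg A \imp \neg A$ could itself serve as an alternative axiom in place of $\bk A \imp \neg\neg A$ (the converse derivation recovers intuitionistic reflection by taking $\neg A$ for $A$ and using $\neg\neg\neg A \bimp \neg A$ again).
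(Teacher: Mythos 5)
Your proof is correct and is essentially the paper's own derivation: instantiate intuitionistic reflection at $\neg A$ to get $\bk\neg A \imp \neg\neg\neg A$, then collapse the triple negation using the \ipc\ equivalence $\neg X \bimp \neg\neg\neg X$. The remarks about substitution and the alternative axiomatization are fine asides, but the core argument matches the paper exactly.
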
 

\begin{proof} \ 

1. $\bk \neg A \imp \neg\neg\neg A$ - intuitionistic reflection;

2.  $\bk \neg A \imp \neg A$ - from 1 by $\neg X \bimp \neg\neg\neg X$.

\end{proof}

Intuitionistic knowledge and negation commute: the impossibility of verifying $A$ is equivalent to verifying that $A$ cannot possibly hold. 

\begin{theorem}\label{th:kcomm}
$\iel \proves \neg \bk A \bimp \bk \neg A.$
\end{theorem}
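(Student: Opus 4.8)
Prove $\iel \proves \neg \bk A \bimp \bk \neg A$.

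The plan is to prove the biconditional by establishing the two implications $\bk \neg A \imp \neg \bk A$ and $\neg \bk A \imp \bk \neg A$ separately, each by a short chain of already-proved facts.

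For the direction $\bk \neg A \imp \neg \bk A$ I would simply compose two earlier results. Theorem \ref{th:fknref} gives $\iel \proves \bk \neg A \imp \neg A$ (reflection for negated formulas), and part 3 of the Truth Condition theorem (Theorem \ref{th:ielth}) gives $\neg A \imp \neg \bk A$. Chaining these by transitivity of implication yields $\bk \neg A \imp \neg \bk A$ at once, so this direction is routine.

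For the direction $\neg \bk A \imp \bk \neg A$ I would combine negative introspection with distribution. First, co-reflection $A \imp \bk A$ instantiated at $\neg \bk A$ — equivalently, negative introspection (part 5 of Proposition \ref{thm:basicprop}) — gives $\neg \bk A \imp \bk \neg \bk A$. Second, the contrapositive of co-reflection is the theorem $\neg \bk A \imp \neg A$; applying $\bk$-necessitation (part 1 of Proposition \ref{thm:basicprop}) to it yields $\bk(\neg \bk A \imp \neg A)$, and distribution then delivers $\bk \neg \bk A \imp \bk \neg A$. Composing the two steps gives $\neg \bk A \imp \bk \neg A$.

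The main point of interest is this forward direction: the key move is to obtain the boxed contrapositive of co-reflection, $\bk(\neg \bk A \imp \neg A)$, by necessitation and then push it through distribution, after first using negative introspection to produce $\bk \neg \bk A$. The reverse direction is immediate from Theorem \ref{th:fknref} and the Truth Condition. Note that intuitionistic reflection $\bk A \imp \neg\neg A$ is never invoked directly — it enters only indirectly through those two earlier results — so the whole argument stays comfortably within \iel.
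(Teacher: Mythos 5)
Your proof is correct, and your backward direction ($\bk\neg A \imp \neg\bk A$) is exactly the paper's: Theorem \ref{th:fknref} chained with Theorem \ref{th:ielth} part 3. Your forward direction, though, takes a genuinely different route. The paper never invokes the modal machinery at all: it contraposes co-reflection to get $\neg\bk A \imp \neg A$ and then applies co-reflection a second time, to the formula $\neg A$ itself, obtaining $\neg A \imp \bk\neg A$, and chains the two. You instead avoid instantiating co-reflection at $\neg A$: you take negative introspection $\neg\bk A \imp \bk\neg\bk A$ (co-reflection at $\neg\bk A$), box the contrapositive $\neg\bk A \imp \neg A$ by necessitation, and push it through distribution to get $\bk\neg\bk A \imp \bk\neg A$. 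Both derivations are sound, and both forward directions in fact already go through in \ielm\ (only the backward direction needs intuitionistic reflection, as you correctly observe at the end). What each buys: the paper's argument is shorter and makes the crux maximally transparent --- $\neg A$ is a formula like any other, so it falls under $A \imp \bk A$ directly; yours is the generic normal-modal-logic pattern (introspection plus the derived monotonicity rule ``$\proves X \imp Y$ yields $\proves \bk X \imp \bk Y$''), which is more mechanical but would still apply in a setting where one had negative introspection and normality without assuming the full co-reflection schema for arbitrary formulas.
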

\begin{proof}
  `$\leftarrow$' follows by Theorem \ref{th:fknref} and Theorem \ref{th:ielth} part 3. Let us check `$\imp$':
\medskip\par
1.  $A \imp \bk A$ - co-reflection;

2. $\neg\bk A \imp \neg A$ - contrapositive of 1;

3. $\neg A \imp \bk \neg A$ - co-reflection;

4. $\neg\bk A \imp \bk \neg A$ - from 2 and 3.
\end{proof}

In logics of intuitionistic knowledge, the impossibility of verification is equivalent to the impossibility of proof, see section \ref{sec:percival} for discussion.

\begin{theorem}\label{th:nka=na}
$\iel\ \proves \neg \bk A \bimp \neg A.$
\end{theorem}
\begin{proof}
 `$\rightarrow$' is shown in Theorem \ref{th:kcomm}, line 2.  Let us check `$\leftarrow$':

  1. $\neg A$ - assumption; 

  2. $\bk \neg A$ - from 1 and co-reflection;

  3. $\neg \bk A$ - from 2 and Theorem \ref{th:kcomm}.
\end{proof}

Within the intuitionistic knowledge framework, no truth is unverifiable, see section \ref{sec:percival} for discussion.

\begin{theorem}\label{th:vfbl}
$\iel \proves \neg(\neg \bk A \land \neg \bk \neg A).$
\end{theorem}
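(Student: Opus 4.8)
The plan is to derive a contradiction from the assumption $\neg \bk A \land \neg \bk \neg A$ and then discharge it, leaning on the strong collapse provided by Theorem~\ref{th:nka=na}, namely $\iel \proves \neg \bk A \bimp \neg A$, together with co-reflection.

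First I would take $\neg \bk A \land \neg \bk \neg A$ as a working assumption. From its left conjunct $\neg \bk A$, the forward direction of Theorem~\ref{th:nka=na} immediately yields $\neg A$. Next, applying co-reflection (the \ielm-axiom $A \imp \bk A$) to the formula $\neg A$ gives $\bk \neg A$. But the right conjunct of the assumption is precisely $\neg \bk \neg A$, so $\bk \neg A$ and $\neg \bk \neg A$ together produce $\bot$. Discharging the assumption then delivers $\neg(\neg \bk A \land \neg \bk \neg A)$, as required.

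An equivalent, more symmetric route would be to note that Theorem~\ref{th:nka=na}, applied to $A$ and (after uniform substitution) to $\neg A$, yields $\neg \bk A \bimp \neg A$ and $\neg \bk \neg A \bimp \neg\neg A$; the conjunction in question is then equivalent to $\neg A \land \neg\neg A$, which is intuitionistically inconsistent, so its negation is an \ipc-theorem transported back through the two equivalences.

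I do not expect any genuine obstacle here: essentially all of the difficulty has already been absorbed into Theorem~\ref{th:nka=na}, whose backward direction itself rested on co-reflection. The only point requiring care is to invoke co-reflection on $\neg A$ rather than on $A$, so that one lands on $\bk \neg A$ and contradicts the second conjunct directly.
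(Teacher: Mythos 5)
Your proof is correct, and it takes a slightly different route from the paper's. The paper proceeds symmetrically: it applies Theorem~\ref{th:kcomm} to both conjuncts, turning the assumption $\neg\bk A \land \neg\bk\neg A$ into $\bk\neg A \land \bk\neg\neg A$, then strips the $\bk$ from each conjunct with Theorem~\ref{th:fknref}, landing on the propositional contradiction $\neg A \land \neg\neg A$. You instead work asymmetrically: only the first conjunct is collapsed, to $\neg A$, via the forward direction of Theorem~\ref{th:nka=na}; co-reflection then lifts this back up to $\bk\neg A$, and the contradiction is produced at the modal level against the second conjunct $\neg\bk\neg A$. Both routes rest on the same underlying facts (the contrapositive of co-reflection, and reflection for negated formulas), so neither is deeper than the other; yours is marginally shorter, while the paper's symmetric version makes visible the structural point that $\bk$ and $\neg$ commute, so that the hypothesis is outright equivalent to $\neg A \land \neg\neg A$. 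Your second, ``more symmetric'' variant is essentially the paper's proof with Theorem~\ref{th:nka=na} cited in place of the composition of Theorems~\ref{th:kcomm} and~\ref{th:fknref}. One small remark: you do not need uniform substitution to apply Theorem~\ref{th:nka=na} to $\neg A$, since that theorem is proved as a schema holding for arbitrary $A$; invoking substitution is harmless, however, as Proposition~\ref{thm:basicprop} licenses it.
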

\begin{proof} $\ $

  1. $\neg \bk A \land \neg \bk \neg A$ - assumption;

  2. $\bk \neg A \land \bk \neg \neg A$ - by Theorem \ref{th:kcomm};

  3. $\neg A \land \neg\neg A$ - by Theorem \ref{th:fknref}; 

  4. $\bot$ - from 3;

  5. $\neg(\neg \bk A \land \neg \bk \neg A)$ - from 1--4.
\end{proof}

Intuitionistic verifications do not have the disjunction property. 

\begin{theorem}\label{th:nkdisj}
For $\mathcal{L}\in\{\ielm, \iel \}$, \[\mathcal{L}\nproves \bk(A \lor B) \imp (\bk A \lor \bk B).\]
\end{theorem}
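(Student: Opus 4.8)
The plan is to exhibit a single countermodel---valid for the class of $\ielm$-models and hence also for $\iel$-models, since every $\iel$-model is an $\ielm$-model---at which the implication $\bk(A\lor B)\imp(\bk A\lor\bk B)$ fails. By Soundness (Theorem~\ref{th:sound}) it suffices to find one model and one world refuting the formula. The guiding intuition, already flagged in the text around the $\bk$-clause, is that $\bk(A\lor B)$ only asserts that the \emph{disjunction} $A\lor B$ holds throughout the audit set $E(u)$, whereas $\bk A\lor\bk B$ demands that one fixed disjunct hold throughout $E(u)$; so I want an audit set split into two parts, one forcing $A$ (and not $B$) and the other forcing $B$ (and not $A$).

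Concretely, first I would take $A,B$ to be distinct atomic letters $p,q$. Then I would build a model with a root $u$ whose audit set $E(u)$ contains (at least) two worlds $v,w$ with $v\forces p$, $v\nforces q$, $w\forces q$, $w\nforces p$. The underlying poset $\langle W,R\rangle$ and the relation $E$ must be chosen so as to satisfy the $\ielm$-model conditions: $E(x)\subseteq R(x)$ for every $x$, and $xRy$ implies $E(y)\subseteq E(x)$; monotonicity of the atomic valuation along $R$ must also hold. A convenient choice is to make $v$ and $w$ be the only worlds reachable from $u$ and to make them $R$-maximal (leaf nodes), with $R$ reflexive on each, and to set $E(v)=\{v\}$, $E(w)=\{w\}$, $E(u)=\{v,w\}$; one checks the coordination conditions directly. (If one wants a model that is simultaneously an $\iel$-model one must additionally ensure $E(x)\neq\emptyset$ at every world, which the above assignment already satisfies.)

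Next I would verify the forcing facts. At $u$ we have $v\forces p\lor q$ and $w\forces p\lor q$, so every element of $E(u)$ forces $p\lor q$, giving $u\forces\bk(p\lor q)$. On the other hand, $w\in E(u)$ with $w\nforces p$ shows $u\nforces\bk p$, and $v\in E(u)$ with $v\nforces q$ shows $u\nforces\bk q$; since forcing of a disjunction at $u$ requires forcing one of the disjuncts at $u$, we get $u\nforces\bk p\lor\bk q$. Hence $u\nforces\bk(p\lor q)\imp(\bk p\lor\bk q)$, and by soundness the formula is not a theorem of either system.

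The only genuinely delicate point---the ``main obstacle''---is bookkeeping on the frame conditions rather than the forcing computation: I must make sure the chosen $E$ is coordinated with $R$, i.e.\ that $E(u)\subseteq R(u)$ and that the antitonicity condition $xRy\Rightarrow E(y)\subseteq E(x)$ holds at \emph{every} pair, including $uRv$ and $uRw$. With the leaf-node assignment above, $uRv$ forces $E(v)=\{v\}\subseteq E(u)=\{v,w\}$ and similarly for $w$, so the condition is met; but any careless enlargement of the audit sets could break it, so this verification is where care is needed. Everything else reduces to the standard observation that $\bk$ distributes over $\land$ but not over $\lor$, exactly as the informal discussion preceding the theorem predicts.
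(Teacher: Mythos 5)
Your proposal is correct and is essentially the paper's own proof: the paper's countermodel $\mathcal{M}_3$ has exactly your frame (root $1$ with $E(1)=\{2,3\}$, reflexive leaves with $E(2)=\{2\}$, $E(3)=\{3\}$), the only difference being that the paper instantiates the disjunction as $p\lor\neg p$ with a single atom (letting the leaf where $p$ fails force $\neg p$) rather than using two atoms $p,q$. One small wording caution: since $\iel$ has \emph{more} theorems than $\ielm$, the refutation transfers to both systems because your model is an $\iel$-model (all audit sets nonempty), not because ``every $\iel$-model is an $\ielm$-model''---your parenthetical check of $E(x)\neq\emptyset$ is what actually does that work, just as in the paper.
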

\begin{proof}
Consider the following model. $1R2, 1R3$ ($R$ is reflexive); $1E2, 1E3, 2E2, 3E3$; $p$ is atomic and $3 \forces p$. 
Since $2,3 \forces p \lor \neg p$, $1
\forces \bk(p \lor \neg p)$.
However, $1 \nforces \bk p$, and $1 \nforces \bk \neg p$.

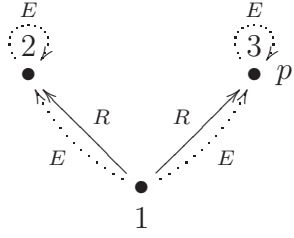
\begin{figure}[H]
\centering\mbox{
\begin{xy}
(0,19)*{\mbox{3}};
(-15,-4)*{\mbox{1}};
(-30,19)*{\mbox{2}};
(4,15)*{\mbox{$p$}};
(-30,15)*+{\bullet}="2";
(0,15)*+{\bullet}="3";
(-15,0)*+{\bullet}="1";
{\ar_R "1";"2"};
{\ar^R "1";"3"};
{\ar@/_/_E@{.>} "1";"3"};
{\ar@{.>}@(ul,ur)^E "3";"3"};
{\ar@/^/^E@{.>} "1";"2"};
{\ar@{.>}@(ul,ur)^E "2";"2"};
\end{xy}
}
\caption{\iel-model $\mathcal{M}_3$}
\end{figure}
\end{proof}

\begin{theorem}\label{th:fkrule}
For $\mathcal{L}\in\{\ielm, \iel \}$, the reflection rule $\vdash \bk A\ \Rightarrow\ \vdash A \text{ is admissible in }\mathcal{L}.$
\end{theorem}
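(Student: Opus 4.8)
The plan is to prove the contrapositive semantically, using soundness (Theorem~\ref{th:sound}) together with the completeness theorem proved just above: it suffices to show that if $\mathcal{L}\nproves A$ then $\mathcal{L}\nproves\bk A$. So suppose $\mathcal{L}\nproves A$. By completeness there is an $\mathcal{L}$-model $\mathcal{M}=\langle W,R,\forces,E\rangle$ and a world $w\in W$ with $w\nforces A$. From $\mathcal{M}$ I will build a new $\mathcal{L}$-model in which $\bk A$ is refuted, which by soundness yields $\mathcal{L}\nforces\bk A$ and hence $\mathcal{L}\nproves\bk A$.

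First I would adjoin a fresh root below $\mathcal{M}$. Put $W'=W\cup\{r\}$ with $r\notin W$, let $R'=R\cup(\{r\}\times W')$ so that $r$ is a reflexive least element lying $R'$-below every world, extend $\forces$ by letting $r$ force no propositional letter, keep $E'(x)=E(x)$ for every $x\in W$, and set $E'(r)=\{w\}\cup\bigcup_{x\in W}E(x)$. The reason for taking this union rather than just $\{w\}$ is dictated by the audit-monotonicity requirement $uR'v\Rightarrow E'(v)\subseteq E'(u)$: since $r$ lies below every $x\in W$, the set $E'(r)$ must absorb each $E(x)$, so it cannot be merely $\{w\}$.

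Next I would verify that $\mathcal{M}'=\langle W',R',\forces',E'\rangle$ is a genuine $\mathcal{L}$-model. The relation $R'$ is a partial order, as adding a new bottom element preserves reflexivity, transitivity and antisymmetry; $\forces'$ is a monotonic evaluation because $r$ forces no letters (so the new downward condition is vacuous) and the old part is untouched; $E'(r)\subseteq W\subseteq R'(r)$; and audit-monotonicity holds by the construction of $E'(r)$. For $\iel$ one additionally needs $E'(u)\neq\emptyset$ for all $u$, which holds at the old worlds by hypothesis and at $r$ because $w\in E'(r)$. Since the $R$- and $E$-structure at and above each old world is unchanged, a routine induction shows that forcing at every $x\in W$ agrees with $\mathcal{M}$; in particular $w\nforces' A$.

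Finally, because $w\in E'(r)$ and $w\nforces' A$, the truth clause for $\bk$ gives $r\nforces'\bk A$. Thus $\mathcal{L}\nforces\bk A$, and by soundness $\mathcal{L}\nproves\bk A$, which is exactly the contrapositive of the reflection rule. I expect the only genuine obstacle to be getting the direction of the audit-monotonicity condition right: because it is $E(v)\subseteq E(u)$ (audit sets shrink upward), the new root's audit set must be made \emph{large} enough to contain every audit set beneath it, so the naive choice $E'(r)=\{w\}$ fails and one is forced to take the union; the remaining verifications are bookkeeping.
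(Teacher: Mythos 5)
Your proof is correct and takes essentially the same approach as the paper: pass to the contrapositive via completeness, adjoin a fresh root below the countermodel with empty atomic valuation, let the root $E$-access the world refuting $A$, note that forcing on the old part is unchanged (generated submodel), and conclude $\mathcal{L}\nproves\bk A$ by soundness. The only difference is your choice of the root's audit set: the paper simply puts $x_0E'u$ for \emph{all} $u\in W'$, which makes the audit-monotonicity condition trivial, whereas your union $\{w\}\cup\bigcup_{x\in W}E(x)$ is a slightly more economical but equally valid way to satisfy it.
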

\begin{proof}
  Suppose $\nproves A$, hence, by completeness, there is an $\mathcal L$-model $\mathcal{M}=\langle W, R, \forces, E \rangle$ with a node $x\in W$ s.t. $x \nforces A$. Construct a new $\mathcal L$-model,  $\mathcal{N}=\langle W^\prime, R^\prime, \forces^\prime, E^\prime\rangle$ such that 

 \begin{itemize}
\item $W^\prime=W\cup\{x_0\}$ ($x_0$ is a new node);
 \item $x_0 R^\prime u$ and $x_0 E^\prime u$ for all $u\in W^\prime$, $R^\prime$ coincides with $R$ and $E^\prime$ coincides with $E$ on $W$;
\item $x_0\not\forces^\prime p$ for each atomic sentence $p$ and $\forces^\prime$ coincides with $\forces$ on $W$.
\end{itemize}
Clearly $\mathcal{M}$ is a generated submodel of $\mathcal{N}$, hence $\forces^\prime A$ coincides with $\forces A$ on $W$ for all $A$. 
Furthermore, $x_0 \nforces^\prime \bk A$, since $x \nforces^\prime A$ and $x_0E^\prime x$. Therefore, ${\mathcal L}\nproves \bk A$ also. 
\end{proof}

\medskip\par

\begin{theorem}[Disjunction Property]
For $\mathcal{L}\in\{\ielm, \iel \}$, \begin{center}if $\mathcal{L} \proves A \lor B$ then either $\mathcal{L} \proves A$ or $\mathcal{L}\proves B$.\end{center}
\end{theorem}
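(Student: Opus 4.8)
The plan is to prove the contrapositive using completeness, soundness, and a model-gluing construction of exactly the kind used in Theorem~\ref{th:fkrule}, only gluing two counter-models under a common root instead of one. Suppose $\mathcal{L} \nproves A$ and $\mathcal{L} \nproves B$; I want to produce a single $\mathcal{L}$-model falsifying $A \lor B$, whence by Soundness (Theorem~\ref{th:sound}) it follows that $\mathcal{L} \nproves A \lor B$.

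First I would apply the Completeness theorem twice to obtain $\mathcal{L}$-models $\mcm_1 = \langle W_1, R_1, \forces_1, E_1\rangle$ with a node $x_1 \nforces_1 A$ and $\mcm_2 = \langle W_2, R_2, \forces_2, E_2\rangle$ with a node $x_2 \nforces_2 B$; without loss of generality take $W_1$ and $W_2$ disjoint (and, if convenient, pass to the submodels generated by $x_1$ and $x_2$, which are again $\mathcal{L}$-models since $E \subseteq R$ and $R$ is transitive). Then I glue: let $\mcn = \langle W', R', \forces', E'\rangle$ where $W' = W_1 \cup W_2 \cup \{x_0\}$ for a fresh root $x_0$; let $R'$ extend $R_1 \cup R_2$ by putting $x_0 R' u$ for every $u \in W'$; and let $\forces'$ agree with $\forces_1, \forces_2$ on the old worlds while $x_0 \nforces' p$ for every atomic $p$.

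The delicate point, and the one I expect to be the main obstacle, is defining $E'$ at the new root so that all the audit-set conditions of Definition~\ref{df:ielmmod} survive. Since $x_0 R' v$ for every $v \in W'$, the coordination requirement ``$uRv$ yields $E(v) \subseteq E(u)$'' forces $E'(x_0)$ to dominate every audit set, so I am compelled to set $E'(x_0) = \bigcup_{v \in W_1} E_1(v) \cup \bigcup_{v \in W_2} E_2(v)$ and leave $E'$ unchanged on the old worlds. I would then check routinely that $\langle W', R'\rangle$ is a partial order, that $E'(x_0) \subseteq R'(x_0) = W'$, and that audit monotonicity holds for every $R'$-edge, the only new edges being those issuing from $x_0$, which are handled by this very definition. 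For the case $\mathcal{L} = \iel$ the extra condition $E'(x_0) \neq \emptyset$ holds automatically because each component is an $\iel$-model, so every $E_i(v)$, and hence the union, is non-empty. Because no old world acquires new $R'$- or $E'$-successors, each $\mcm_i$ sits in $\mcn$ as a generated submodel, and so the truth of every formula is preserved on $W_1$ and $W_2$; in particular $x_1 \nforces' A$ and $x_2 \nforces' B$.

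Finally, since $x_0 R' x_1$ and $x_1 \nforces' A$, the contrapositive of Monotonicity (Lemma~\ref{lm:mon}) gives $x_0 \nforces' A$; likewise $x_0 \nforces' B$. Hence $x_0 \nforces' A \lor B$, so $A \lor B$ is not valid in $\mathcal{L}$, and Soundness yields $\mathcal{L} \nproves A \lor B$, which establishes the contrapositive and completes the proof.
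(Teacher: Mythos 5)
Your proof is correct and is essentially the paper's own argument: glue the two counter-models under a fresh root with edges to everything, falsify all atoms there, and use the contrapositive of Monotonicity (Lemma~\ref{lm:mon}) to push the failure of $A$ and of $B$ down to $x_0$, then invoke Soundness. The one divergence is cosmetic: the paper simply sets $x_0 E u$ for \emph{all} $u \in W$ (exactly as in Theorem~\ref{th:fkrule}), which also satisfies both audit conditions and makes non-emptiness for \iel\ immediate; your $E'(x_0) = \bigcup_i \bigcup_{v \in W_i} E_i(v)$ is the \emph{minimal} admissible choice rather than the forced one, but it verifies the same conditions and works equally well.
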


\begin{proof}
  Assume $\nproves A$ and $\nproves B$. By completeness, $\nforces A$ and $\nforces B$. Hence there are $\mathcal L$-models $\mathcal{M}_1 = \langle W_1, R_1, \forces_1, E_1\rangle $ and $\mathcal{M}_2 =  \langle W_2, R_2, \forces_2, E_2\rangle$ with nodes $x_1 \in W_1$ and $x_2 \in W_2$ such that $x_1 \nforces_1 A$ and $x_2 \nforces_2 B$. We define a new $\mathcal L$-model $\mathcal{M} = \langle W, R, \forces, E \rangle$ such that 

\begin{itemize}
 \item $W = W_1 \cup W_2 \cup \{ x_0 \} $ where $x_0 \notin W_1$ and $x_0 \notin W_2$ ($W_1$ and $W_2$ are assumed disjoint).

 \item $x_0Ru$ and $x_0Eu$ for all $u \in W$, $R$ coincides with $R_i$ on $W_i$, and $E$ coincides with $E_i$ on $W_i$, $i=1,2$. 

 \item $x_0 \nforces p$ for each atomic sentence $p$, $\forces$ coincides with $\forces_i$ on $W_i$, $i=1,2$.
\end{itemize}
It is easy to check that for each $i=1,2$ and each $x\in W_i$, 
\[ x\forces A\ \ \mbox{\it iff}\ \ \ x\forces_i A .
\]
We claim that $x_0 \nforces A \lor B$, hence ${\mathcal L}\nproves A \lor B$.
Indeed, if $x_0 \forces A \lor B$, then $x_0 \forces A$ or $x_0 \forces B$. If $x_0 \forces A$ then, by monotonicity,  $x_1 \forces A$, hence $x_1\forces_1 A$ which contradicts our assumptions.  Case $x_0 \forces B$ is symmetric. 

\end{proof}

Despite Theorem \ref{th:nkdisj}, intuitionistic epistemic logic has a weak disjunction property for verifications. 

\begin{corollary}
For $\mathcal{L}\in\{\ielm, \iel \}$, \begin{center} if $\mathcal{L}\proves \bk(A \lor B)$ then either $\mathcal{L}\proves \bk A$ or $\mathcal{L}\proves \bk B$.\end{center}
\end{corollary}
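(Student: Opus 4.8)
The plan is to combine the ordinary disjunction property for $\mathcal{L}$, already established in the preceding theorem, with the weak disjunction property for verifications that Proposition~\ref{th:distconj} makes available. The key observation is that co-reflection lets us pass from a proof of $A \lor B$ to a proof of $\bk(A \lor B)$, while a suitable converse observation lets us descend from $\bk$-formulas back to un-boxed formulas. So the overall strategy is: start from $\mathcal{L} \proves \bk(A \lor B)$, reduce it to the ordinary disjunction property, and then read off the desired conclusion.

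First I would argue that whenever $\mathcal{L} \proves \bk C$ holds for a disjunction-shaped $C$, one can apply the disjunction property to extract a provable disjunct. The cleanest route is to invoke the reflection rule of Theorem~\ref{th:fkrule}: from $\mathcal{L} \proves \bk(A \lor B)$ the admissible reflection rule yields $\mathcal{L} \proves A \lor B$. Now the Disjunction Property (the theorem immediately preceding this corollary) gives either $\mathcal{L} \proves A$ or $\mathcal{L} \proves B$. In the first case co-reflection $A \imp \bk A$ together with modus ponens yields $\mathcal{L} \proves \bk A$; in the second case it yields $\mathcal{L} \proves \bk B$. This closes the argument in two short moves.

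The steps, in order, are: (1) assume $\mathcal{L} \proves \bk(A \lor B)$; (2) apply the reflection rule (Theorem~\ref{th:fkrule}) to obtain $\mathcal{L} \proves A \lor B$; (3) apply the Disjunction Property to split into $\mathcal{L} \proves A$ or $\mathcal{L} \proves B$; (4) in either case apply co-reflection to re-box, obtaining $\mathcal{L} \proves \bk A$ or $\mathcal{L} \proves \bk B$ respectively.

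I do not expect any genuine obstacle here, since every ingredient is already proved: the whole point of the corollary is that it follows formally from the reflection rule and the disjunction property rather than requiring a fresh model construction. The only thing to be careful about is to use the \emph{reflection rule} (admissible, Theorem~\ref{th:fkrule}) rather than the reflection \emph{axiom} $\bk A \imp A$, which is unavailable by Theorem~\ref{th:nfk}; that distinction is exactly what makes the descent from $\bk(A \lor B)$ to $A \lor B$ legitimate despite the failure of factivity. This is also why the result is only a \emph{weak} disjunction property: it transfers provability of $\bk(A \lor B)$ to provability of one boxed disjunct, in contrast to Theorem~\ref{th:nkdisj}, which shows the corresponding implication is not a theorem.
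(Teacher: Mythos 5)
Your proof is correct and takes essentially the same route as the paper's own: apply the admissible reflection rule (Theorem~\ref{th:fkrule}) to descend from $\bk(A \lor B)$ to $A \lor B$, invoke the Disjunction Property, then re-box with co-reflection. (One minor slip: your opening paragraph cites Proposition~\ref{th:distconj}, which concerns conjunction, not disjunction---but it plays no role in your actual steps, so the argument stands as written.)
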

\begin{proof}
  Assume $\mathcal{L} \proves \bk(A \lor B)$ then, by Theorem \ref{th:fkrule}, $\mathcal{L}\proves A \lor B$, hence $\mathcal{L}\proves A$ or $\mathcal{L}\proves B$. In which case $\mathcal{L}\proves \bk A$ or $\mathcal{L}\proves \bk B$ by co-reflection.
  
\end{proof}

\subsection{Modeling Knowledge vs.\ Belief}\label{sec:kvsbel}

As an illustration of our informal remarks, in \ref{sec:knf}, on the difference between intuitionistic belief and knowledge consider again the example of `all swans are white'. 
The following \ielm\ model, $\mathcal{M}_4$, seems to model fairly the belief of an agent before the European discovery of Australia.
$1R2, 1R3$ ($R$ is reflexive); $1E3$; $p$ is `all swans are white' and $3 \forces p$.

\begin{figure}[H]
\centering\mbox{
\begin{xy}
(0,19)*{\mbox{3}};
(-15,-4)*{\mbox{1}};
(-30,19)*{\mbox{2}};
(4,15)*{\mbox{$p$}};
(-30,15)*+{\bullet}="2";
(0,15)*+{\bullet}="3";
(-15,0)*+{\bullet}="1";
{\ar_R "1";"2"};
{\ar^R "1";"3"};
{\ar@/_/_E@{.>} "1";"3"};
% {\ar@{.>}@(ul,ur)^E "3";"3"};
% {\ar@/^/^E@{.>} "1";"2"};
% {\ar@{.>}@(ul,ur)^E "2";"2"};
\end{xy}
}
\caption{\ielm-model $\mathcal{M}_4$}
\end{figure}

The underlying intuitionistic model represents the logical possibilites of developing the agent's information regarding the truth of $p$. 
The epistemic part of the model represents the verifications the agent has performed.
In this case all verifications confirm $p$, hence $\bk p$ holds at 1.
However this is a mere belief: the truth condition fails because $1 \nforces \neg\neg A$. 
This models the historical situation in which it was considered ``known'' that all swans are white, but which was in fact only a
belief because the situation in which $p$ does not hold was not considered epistemically possible.

By contrast, consider the \iel\ model $\mathcal{M}_3$ from Theorem \ref{th:nkdisj}.
This has the same logical possibilities, but the agent has verified each of them.
In this case $\bk p$ does not hold at 1; $p$ is not known because there is verification that it can be false. 

\section{\iel\ and Intuitionistic Responses to the Knowability Paradox}\label{sec:kbpdx}

The Church-Fitch `knowability paradox' is an informal interpretation of a classical derivation in bi-modal logic with the modalities $\bk$ and $\dm$ discovered by Church \cite{Church2009} and reported by Fitch \cite{Fitch1963}. 
The proof shows that
\[
\phantom{\text{verificationist knowability}} A \imp \kb A\tag{verificationist knowability}
\]
classically implies
\[
 A \imp \bk A\tag{omniscience}.
\]

\begin{proposition}[Church-Fitch]\label{th:CF1} 
Verificationist knowability as a schema classically yields omniscience.
\end{proposition}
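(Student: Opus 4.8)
The plan is to reproduce the classical Church--Fitch derivation by feeding a suitable \emph{Fitch sentence} into the knowability schema. The crucial observation is that, although the schema $A\imp\kb A$ looks innocuous, its instances at propositions of the form ``unknown truth'' are lethal. So I would fix an arbitrary formula $p$ and instantiate the schema with $A := p\land\neg\bk p$. Written out, this instance is
\[ (p\land\neg\bk p)\imp\kb(p\land\neg\bk p), \]
and the whole argument consists in showing the consequent is refutable, so that the antecedent must fail.

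First I would establish that knowing the Fitch sentence is outright contradictory, i.e.\ that $\proves\neg\bk(p\land\neg\bk p)$. Since $\bk$ is a normal modality (distribution plus necessitation), it distributes over conjunction, giving $\bk(p\land\neg\bk p)\imp(\bk p\land\bk\neg\bk p)$. Applying factivity $\bk\neg\bk p\imp\neg\bk p$ to the second conjunct collapses it, leaving $\bk p\land\neg\bk p$, which is $\bot$. Hence $\proves\neg\bk(p\land\neg\bk p)$.

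Next I would push this impossibility through the possibility modality. From $\proves\neg X$ one obtains $\proves\neg\dm X$: monotonicity of $\dm$ turns $X\imp\bot$ into $\dm X\imp\dm\bot$, and $\dm\bot\bimp\bot$ in any normal modal logic. Taking $X:=\bk(p\land\neg\bk p)$ yields $\proves\neg\dm\bk(p\land\neg\bk p)$, that is $\proves\neg\kb(p\land\neg\bk p)$. Combining this with the knowability instance by modus tollens gives $\neg(p\land\neg\bk p)$, and here classical logic converts $\neg(p\land\neg\bk p)$ into $p\imp\bk p$. Since $p$ was arbitrary, omniscience $A\imp\bk A$ follows as a schema.

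The main obstacle --- and the conceptual heart of the result --- is the choice of the Fitch substitution $A:=p\land\neg\bk p$ together with the verification that knowledge of it is self-defeating; everything else is routine normal-modal bookkeeping. It is worth flagging that two ingredients do the real work: factivity $\bk A\imp A$ of the knowledge modality, and the final classical inference from $\neg(p\land\neg\bk p)$ to $p\imp\bk p$. The latter is exactly the step that fails intuitionistically, which is precisely the leverage the paper exploits in its constructive resolution of the paradox.
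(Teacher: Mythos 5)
Your proposal is correct and follows essentially the same route as the paper's proof: the Fitch substitution $A := p\land\neg\bk p$, the refutation of $\bk(p\land\neg\bk p)$ via distribution over conjunction and factivity, the modal step yielding $\neg\kb(p\land\neg\bk p)$, and the final classical passage from $\neg(p\land\neg\bk p)$ to $p\imp\bk p$ by double negation elimination. The only cosmetic difference is that you derive $\neg\dm X$ from $\neg X$ by monotonicity of $\dm$ and $\dm\bot\bimp\bot$, whereas the paper uses necessitation followed by $\Box\neg X\imp\neg\dm X$; these are interchangeable pieces of normal-modal bookkeeping, and your closing remark about which step fails intuitionistically matches the paper's own analysis.
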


\begin{proof} $\ $

1. $(\mathit{p\wedge\neg\bk p})\rightarrow\Diamond\mathbf{ K}(\mathit{ p\wedge\neg\bk p})$ - verificationist knowability;

2. $\mathbf{K}(\mathit{ p\wedge\neg\bk p})$ - assumption;

3. $\bk p\wedge \bk\neg\bk p$ - from 2 by standard modal reasoning;

4. $\bk p\wedge \neg\bk p$ - from 3 and reflection;

5. $\neg \mathbf{K}(\mathit{ p\wedge\neg\bk p})$ - from 2--4;

6. $\Box \neg \mathbf{K}(\mathit{ p\wedge\neg\bk p})$ - from 5 and necessitation;

7. $\neg\Diamond \mathbf{K}(\mathit{ p\wedge\neg\bk p})$ - from 6 and $\Box\neg X \rightarrow \neg\Diamond X$; 

8. $\neg(\mathit{p\wedge\neg\bk p})$ - from 1 and 7; 

9. $p\rightarrow\neg\neg\bk p$ - from 8 and $\neg(X\wedge Y)\rightarrow(X\rightarrow\neg Y)$; 

10. $p\rightarrow\bk p$ - from 9 and double negation elimination.

\end{proof}

The informal interpretation of this formal proof takes it to show that
\begin{center}
 \emph{all truths are knowable}
\end{center}
implies
\begin{center}
 \emph{all truths are known.}
\end{center}
That all truths are knowable is taken to be definitive of intuitionistic truth, hence the `knowability paradox' appears to be a \emph{reductio ad absurdum} of the very idea of intuitionism.%[
\footnote{We use the term `knowability paradox' to denote the informal argument, based on the proof, which is supposed to refute intuitionism. The term `Church-Fitch proof' denotes the derivation itself.}%]

Intuitionistic responses to the paradox so far have accepted this informal interpretation of the Church-Fitch construction, and hence been committed to showing that an intuitionistic conception of truth and knowledge does not yield co-reflection, and hence is not committed to omniscience, e.g.\ \cite{Proietti2012, Dummett2001, Dummett2009, Tennant1997, Tennant2009, Williamson1982, Williamson1988, Williamson1992, Williamson1994}. 
We argue that the proper intuitionistic response is simply that there is no paradox;\footnote{See e.g.\ \cite{Dummett2009, Usberti1994, Khlentzos2004, Rasmussen2009}.} intuitionistically the `knowability paradox' is a pseudo-problem which holds \emph{only} from a classical standpoint.
The supposedly devastating conclusions of the knowability paradox are solely the product of a classical reading of the principles of the Church-Fitch construction, combined with an incorrect representation of intuitionistic truth in a classical framework. 

\subsection{\iel\ Response to the Knowability Paradox}
\label{sec:ielkbpdx}

Construed as a problem for intuitionism the `knowability paradox' depends on the following assumptions:

\begin{enumerate}[noitemsep]
 \item\label{it:clct} $A \imp \bk A$ means \emph{all truths are known}.
 
 \item\label{it:clvk} $A \imp \kb A$ means \emph{all truths are knowable}.

 \item\label{it:int=kb} That all truths are knowable is definitive of intuitionistic truth.
\end{enumerate}

Since the `knowability paradox' incorporates a particular view about the nature of intuitionistic truth and its relationship to knowledge, we restrict our discussion to the perspective afforded to us by our system of intuitionistic knowledge, \iel\ (though much of what we say holds just as well for intuitionistic belief).
From this perspective we argue none of these hold. 

\subsubsection{Reply to \ref{it:clct}}\label{sec:rep1}

$A \imp \bk A$ can be understood as claiming \emph{all truths are known} only on a classical reading. 
Intuitionistically it means something very different, namely, \emph{constructive truth, i.e.\, proof, yields verification/knowledge}, which is central to an intuitionistic view of knowledge.

\subsubsection{Reply to \ref{it:clvk}}\label{sec:rep2}

Knowability is taken to be a definitive characteristic of intuitionistic truth, but $A \imp \kb A$ is not a good formalization of this idea in classical logic.

As a classical principle it does not capture the intended (intuitionistic) relation between truth and knowledge (see \cite{Artemov2010, Artemov2012}).
The straightforward classical logic reading of $A \imp \dm\bk A$ says \emph{all classical truths are knowable}, which is plainly false.
To be an adequate classical formalization of an intuitionistic notion of knowability one has to `build in' the constructivity of intuitionistic truth; when this is done\footnote{In the principles SK and MK of \cite{Artemov2010, Artemov2012}.} no paradoxical conclusions follow.

As an intuitionistic principle it is not immediately clear what $A \imp \dm\bk A$ says. 
This may be construed as saying that `all (intuitionistic) truths are know\-able\-/verifiable', but it is not clear what theoretical or expressive advantage is gained by adopting this, given the stronger principle of co-reflection $A\imp\bk A$ holds intuitionistically. Perhaps, a combination of reading BHK-proofs as timeless platonic entities not necessarily available to the knower with a strict reading of $\bk A$ as {\em $A$ is actually known} (cf.\ section~\ref{sec:bhk-meaning}) could provide a reasonable semantics 
for the knowability principle $A \imp \kb A$: \[ \mbox{\em if a proposition has a proof then it is possible to actually know it.} \]

\subsubsection{Reply to \ref{it:int=kb}}\label{sec:rep3}

The characteristic feature of intuitionistic truth is its constructivity; a proposition is true if there is a proof of it. 
A proof is an especially strict kind of verification; hence intuitionistic truth yields verification, hence knowledge. 
But it is precisely for this reason that no intuitionistic truth is beyond the possibility of knowledge; intuitionistic truth is knowable \emph{because it is constructive}.
Hence constructivity, not knowability, is the definitive feature of intuitionistic truth.

Moreover, we argue that $A \imp \bk A$ is the intuitionistic formal expression of the constructivity of intuitionistic truth. Knowability in its different possible forms appears to be its immediate consequence. 
For example, $A \imp \bk A$ implies each of $\neg(\neg \bk A \land \neg \bk \neg A)$, $A \imp \neg\neg\bk A$ (see \ref{sec:intkb}), and assuming `what is proved is possible' $A \imp \dm\bk A$.
Each of these may be regarded as possible intuitionistic formalizations of `all truths are knowable'. All are easy consequences of the constructivity of intuitionistic truth.

\medskip\par

For these reasons, the `knowability paradox' does not constitute a problem for the intuitionistic notion of truth.

\subsection{Intuitionistic Criticism of Church-Fitch}
\label{sec:intCF}

One line of intuitionistic response has been to criticize the Church-Fitch proof.
Williamson argues, \cite{Williamson1982}, that the proof is intuitionistically propositionally invalid, since it involves a step of double negation elimination.
Intuitionistically Church-Fitch establishes only
\[
\phantom{\text{intuitionistic knowability}} A \imp \neg\neg \bk A\tag{intuitionistic knowability}
\]
which, read intuitionistically, is not paradoxical, hence the intuitionist is not committed to co-reflection.%[
\footnote{Note that though reflection is used in the proof, line 4 of Proposition \ref{th:CF1}, it is of the intuitionistically acceptable kind $\bk \neg A \imp \neg A$, Theorem \ref{th:nfk}.} %]

Note, however, that the conclusions of the Church-Fitch proof in either classical or intuitionistic form, $A\imp \bk A$ or $A\imp \neg\neg\bk A$, are valid in \iel; neither depend on verificationist knowability or on the Church-Fitch proof.
As a derivation, the Church-Fitch proof turns out to be irrelevant to intuitionistic foundations; arguments against intuitionistic truth and knowledge have to take a different approach.

\subsection{Intuitionistic Knowability}
\label{sec:intkb}

Another intuitionistic approach to showing that co-reflection/omniscience does not hold argues that intuitionistic knowability itself is a better formalisation than verificationist knowability of the informal idea that all truths are knowable, or of the feature of the relation between intuitionistic truth and knowledge which this is intended to express (see \cite{Dummett2009, Rasmussen2009, Devidi2001}). 

There is much to recommend this, though, following our comments in \ref{sec:rep2} and \ref{sec:rep3}, we would argue that it, like verificationist knowability, does not capture the basic relation between intuitionistic truth and knowledge.
Intuitionistic knowability can be read as a formalization of `all truths are knowable'; Dummett \cite[p.52]{Dummett2009} reads it as ``if $A$ is true then the possibility that $A$ will come to be known always remains open''. 
It can also be read as a form of weak co-reflection -- indeed one might consider weak versions of our intuitionistic epistemic systems formulated on its basis
(see section \ref{sec:percival} for further discussion of $A \imp \neg\neg\bk A$) -- but it too, like verificationist knowability, does not fully capture the relation between intuitionistic truth and knowledge. 

\section{Criticisms of \iel\ Principles}\label{sec:crit}

\subsection{Intuitionistic Rejection of Co-reflection}
\label{sec:reject}

\subsubsection{Hart}\label{sec:hart}

The common feature of intuitionistic responses to the `knowability paradox' has been the commitment to rejecting co-reflection, based on accepting its classical reading as omniscience.
From the very first intuitionistic response to the knowability paradox, we find this rejection even in the face of direct, intuitionistically acceptable, arguments for the validity of co-reflection.

Hart's \cite[p.165]{Hart1979} sets the pattern -- though he thinks such a response is mistaken. 

    \begin{quote}\small{
    Incidentally, on an intuitionist reading, it just might be that every truth is known. For being in an intuitionist position to assert that $\forall x (Fx \imp Gx)$ requires a method which given an object and a proof that it is $F$, yields a proof that it is $G$. In the present instance this means: suppose we are given a sentence \dots and a proof that it is true. Read the proof; thereby you come to know that the sentence is true. Reflecting on your recent learning, you recognize that the sentence is now known by you; this shows that the truth is known. If this argument is intuitionistically acceptable \dots then I think that fact reflects poorly on intuitionism; surely we have good inductive grounds for believing that there are truths as yet unknown.}
    \end{quote}

This has all the elements of a justification of intuitionistic co-reflection.
However Hart does not apply the argument to the reading of co-reflection, instead treating his argument that `proof yields knowledge' as a justification for `all truths are known'. 
This is unstable, one can reject intuitionism altogether and the argument for the validity of co-reflection with it, of course, but one cannot argue that a classical understanding of a principle invalidates an intuitionistic reading of it. 

\subsubsection{Williamson}\label{sec:williamson}

Williamson, in \cite{Williamson1982, Williamson1988, Williamson1994}, considers Hart's argument supporting $A \imp \bk A$, but rejects it for the same reason, and seeks to devise a form of intuitionistic semantics which invalidates co-reflection. 

To do this Williamson \cite{Williamson1988} distinguishes between \emph{proof-tokens} and \emph{proof-types}. 
Proof-tokens are of the same type just if they have the same structure and conclusion, though they may be effected at different times. 
Co-reflection holds for proof-tokens, any proof-token of $A$ can be turned into a proof-token of $\bk A$, but not for proof-types. 
In the case of proof-types $A \imp \bk A$ says that there is a function which takes a proof-type of $A$ to a proof-type of $\bk A$.
In this context `$\bk A$' is read as `there exists a time $t$ such that $A$ will have been proved at $t$'. 
Moreover, the validity of co-reflection requires that this function be \emph{unitype}, meaning that if inputs, $p$ and $q$,  are of the same type then the outputs, $f(p)$ and $f(q)$, are of the same type also. 
Hence ``a proof of $A \imp \bk A$ is a unitype function that evidently takes any proof token of $A$ to a proof token, for some time $t$, of the proposition that $A$ is proved at $t$'', \cite[p.430]{Williamson1988}. 
Williamson's contention is that such a function does not exist in all cases. 
It exists where we already have an input for the function, i.e.\ a proof of $A$. 
In the case where we do not have an input all we can consider is the function $f$ itself, which takes us from hypothetical proof-tokens of $A$ to proof-tokens of $\bk A$. 
But such a function is not unitype. 
Assume that $p$ and $q$ are token-proofs of $A$ of the same type carried out at different times, then $f(p)$ and $f(q)$ will be proof tokens of different types. 
$f(p)$ is a proof that $\bk A$ is proved at time $t$ and $f(q)$ is a proof that $\bk A$ is proved at time $t^\prime$. 
Hence co-reflection is not generally valid.%[
\footnote{See \cite{Usberti1994} for an argument that such a function does exist; $f$ cannot operate on hypothetical proof tokens, since they do not exist; so $f$ can still be defined as a unitype function taking a proof of $A$ and returning a proof of $\bk A$. For an objection to this see \cite{Murzi2010}. On the debate about the status of hypothetical reasoning in intuitionism see \cite[p.30]{vanDalen1988v1} and the references contained therein.}%]

In response, we point out that the BHK semantics has no temporal com\-pon\-ent. 
Wil\-liamson interprets co-reflection as a kind of universal proof-checking (as do we, see section \ref{sec:pyk}), but the time at which the proposition was proved is, normally, not essential to checking a proof's correctness. 
Co-reflection asserts correctly that given a proof, $x$, of $A$ proof-checking produces another proof, $y$, that there exists a verification of $A$, namely a proof $x$ of $A$. 
As we see, co-reflection holds independently of the time $x$ is carried out or of whether it has already been constructed or is only hypothetical: proof-checking is a correct procedure the possibility of which is independent of any assumptions about specific proofs.
Williamson interprets co-reflection in a rather non-standard way by adding an alien temporal component to devise a reading under which co-reflection could fail, but the intuitionist need not accept this temporal aspect.

This exhibits the same instability found in Hart's response, which is attributable to working in an intuitionistic context without achieving a complete liberation from a classical conception of knowledge. 

\subsection{Other Intuitionistic Epistemic Logics}
\label{sec:altiel}

\subsubsection{Williamson}
\label{sec:williamsoniel}

The commitment to a classical conception of knowledge is even more in evidence in William\-son's formulation of an intuitionistic modal epistemic logic \cite{Williamson1992}, a goal of which is to invalidate co-reflection, while at the same time reflection, $\bk A \imp A$, is endorsed explicitly.
This is striking since Williamson treats intuitionistic knowledge as a kind of verification, and acknowledges that verifications need not be proofs.%[
\footnote{Williamson's verifications are somewhat different from ours (see note \ref{ft:Wmson}), since $\bk A$ always expresses an empirical proposition -- regarding the contingency of $\bk$ see \ref{sec:percival}.} 
%]
Williamson's intuitionistic epistemic logic is not based on the standard BHK semantics, but rather on the idea that ``intuitionistic truth consists in the possibility of verification'' \cite[63]{Williamson1992}. 
Hence Williamson's intuitionistic epistemic logic does not capture the sense of BHK-based knowledge which is our goal, while also importing classical epistemic assumptions into an intuitionistic context.

\subsubsection{Proietti}\label{sec:proietti}

A more recent development of the basic approach taken by DeVidi and Solomon \cite{Devidi2001} is found in Proietti \cite{Proietti2012}, who develops an intuitionistic epistemic logic based on a Kripkean semantics.

Proietti's basic assumptions follow the pattern for intuitionistic responses to the knowability paradox: that even under intuitionistic assumptions $A \imp \bk A$ is invalid. At the same time he assumes explicitly that $\bk A \imp A$ holds in the logic.

It should be noted, however, that Proietti is not trying to analyze Brouwer's original intuitionistic paradigm of `truth as provability'. Proietti's starting point is rather the later Kripke semantics of intuitionistic logic, which is not ideologically and technically faithful to the original intuitionistic foundations. The relationship between truth, proof and knowledge does not arise in a Kripkean semantic context, since proof is not part of the picture, but for this very reason the intuitionistic considerations in favor of co-reflection and against reflection cannot come up.%[
\footnote{Another system of intuitionistic epistemic logic, presented as extending the BHK semantics to knowledge, is given by Hirai \cite{Hirai2010,Hirai2010a}. 
Reflection is valid in this system, the rationale for this is its admissibility in classical epistemic logic.
Additionally the co-reflection principle is not considered at all. 
It is clear, however, that it is not Hirai's aim to give an intuitionistic analysis of knowledge, but rather to model asynchronous communication between computational processes.
Accordingly it appears that Hirai's system has the same classical bias as the systems already discussed, but since his aims are rather divergent from ours, we offer this only as an observation about the formalism.
} 
%] 

\subsection{Percival and the `Paradoxes of Intuitionistic Knowledge'}\label{sec:percival}

The principle $A \imp \neg\neg \bk A$, we have seen, has been taken either as an intuitionistic solution to the knowability paradox, or as definitive of the relation between intuitionistic truth and knowledge, see sections \ref{sec:intCF} and \ref{sec:intkb}. 
Percival objects that intuitionistically $A \imp \neg\neg \bk A$ yields paradoxical consequences. 
Since it is an easy consequence of co-reflection, and hence valid in \ielm, his arguments are an objection to the BHK view of knowledge and to the 
intuitionistic epistemic logic that results from it.

Percival argues that intuitionistically $A \imp \neg\neg \bk A$ implies $\neg\bk A \bimp \neg A$ and $\neg(\neg\bk A \land \neg\bk\neg A)$, both of which are intuitionistically unacceptable \cite[p.183]{Percival1990}. 
The first Percival reads as claiming that the falsehood of $A$ and ignorance of $A$ are logically equivalent intuitionistically. 
But, he argues, this cannot be. 
Assume that $\neg A$ is a mathematical proposition, hence necessarily true. 
Whether $A$ is not known, $\neg \bk A$, is a contingent matter. 
Hence there must be some state of some model where $\neg A$ holds and $\neg \bk A$ does not. 
The second Percival reads as claiming that no statement is forever undecided.%[ 
\footnote{This is also known as the `undecidedness paradox of knowability' \cite[]{Brogaard2009a}, since Percival's objections are to the intuitionistic argument against the knowability paradox discussed in section \ref{sec:intCF}.} %] 
He claims that this second consequence is just obviously false; there exists a $p$ for which $\neg\bk p \land \neg\bk\neg p$ holds. 

We argue that both Percival's `counterexamples' are valid epistemic principles within the BHK-based \iel\ paradigm (see Theorems \ref{th:nka=na} and \ref{th:vfbl}) and hence do not serve as decisive arguments against the incorporation of verification-based knowledge into an intuitionistic framework.%{
\footnote{Or against intuitionistic responses to the knowability paradox.} %}

First, in terms of intuitionistic knowledge (\iel) $\neg\bk A \bimp \neg A$ claims proving $\neg A$ is equivalent to proving $\neg \bk A$. 
If we can show that a proof of $A$ reduces to a contradiction then $A$ cannot possibly hold, hence neither can $\bk A$. 
Conversely, if a proof of $\bk A$ reduces to a contradiction then there cannot be a verification of $A$, but every proof is also a verification, hence there cannot be a proof of $A$.\footnote{See \cite{Marton2006}.} 
The contingency or necessity of $\neg A$ and $\neg\bk A$ is not relevant because intuitionistically these are statements about the relationship between proofs and verifications of propositions -- whatever their modal status (cf. \cite[p.325]{Devidi2001}).%{
\footnote{Moreover, it is not clear this argument works in its own terms. If $\neg A$ is necessarily true, then $A$ is necessarily false, in which case $A$ \emph{cannot be known}, since for a necessarily false $A$ ignorance of $A$ is also necessary. Hence there is no state of any model where $\neg\bk A$ does not hold.} %}

Second, in \iel, $\neg(\neg\bk A \land \neg\bk\neg A)$ claims that no truth is unverifiable, not that no truth remains forever undecided.%{
\footnote{Again see \cite{Devidi2001}, and section \ref{sec:rep3}.} %}
Indeed, by the previous principle if $\neg \bk A$ held then so does $\neg A$, i.e.\ there is a proof of $\neg A$, hence $\neg A$ is verified and $\bk \neg A$. Once again, the contingency of some agent's ignorance is beside the point. 

Percival's conclusion that since ``[verificationist knowability] has consequences that are plainly unacceptable we \emph{do} know in advance that no intuitionistic defense \dots with a specific semantics \dots is going to work'' does not stand. 
The intended intuitionistic semantics, BHK, as extended to \iel, is such a semantics, and in its terms the consequences are quite acceptable. 

\subsection{\iel\ and Non-Mathematical Propositions}
\label{sec:ielnmp}

But there is a further, more general, premise that Percival assumes which he uses to support his conclusion, and which seems to apply to any attempt to enunciate an intuitionistic view of knowledge. 
He argues that ``as anti-realist sympathizers \dots admit, non-mathematical statements aren't susceptible to \emph{proof} and a proof-conditional interpretation of `$\imp$' isn't \emph{generally} viable. So an intuitionistic defense [against the `paradoxes of intuitionistic knowledge'] can't appeal to it'' \cite[183]{Percival1990}. 
According to this line of reasoning an intuitionistic view of knowledge cannot be put in terms of BHK, because BHK does not apply to all kinds of propositions. 
A legitimate intuitionistic defense must give a semantics that holds for all kinds of propositions, not just mathematical ones, and be ``independently plausible''. 

We respond that this is an illegitimate constraint on an intuitionistic view of knowledge. 
BHK is the intended semantics of intuitionistic logic, indeed the intuitionistic calculus was constructed to capture the BHK semantics not the other way around. 
There are, of course, many non-BHK semantics for \ipc, but it is acknowledged that they are more or less artificial, not true to the intentions of intuitionism, precisely because they do not represent the BHK view.\footnote{See \cite{Artemov2001,vanDalen2002}. 
``The intended interpretation of intuitionistic logic as presented by Heyting, Kreisel and others so far has proved to be rather elusive [\emph{authors' note:} but see \cite{Artemov2001}] \ldots however, ever since Heyting's formalisation, various, more or less artificial, semantics have been proposed'' \cite*[p.22]{vanDalen2002}. } 
To demand an intuitionistic semantics which rules out BHK is, to some extent, to demand an intuitionistic theory which is not intuitionistic, which is not legitimate.

The point of the objection is that the BHK interpretation cannot accommodate non-mathematical propositions. 
However this is plainly wrong since there is nothing specifically mathematical in the BHK description, it makes no mention of numbers, functions, sets, categories, types, etc.\ 
There are a variety of non-mathematical situations in which notions such as justification, evidence, conclusive evidence make sense; notions which have been central to epistemology since its inception, and especially so after Gettier \cite{Gettier1963}. 
Indeed, the notions of proof and conclusive evidence are perfectly normal in various non-mathematical domains, for instance in the context of legal standards establishing guilt or tort.

A number of BHK-style formalisms, e.g.\ Justification Logic (cf.\ \cite{Artemov2001,Artemov2008,Artemov2012b}), have been developed which study the usual logical propositions along with justification assertions  
\[ \mbox{\it t is a justification for F} .\]
Indeed, Justification Logic first appeared as the Logic of Proofs which studied formal mathematical proofs, and was able to fairly represent the mathematical BHK semantics \cite{Artemov2001}. 
However, very soon it became clear that the same logical apparatus represents the principles of justification and evidence at large, which led to general purpose logics of justification with numerous interpretations far beyond its area of origin, mathematical proofs. It is clear, then, that our intuitionistic epistemic framework, and \iel\ in particular, \emph{can} model non-mathematical epistemic situations as well. Moreover, with the built-in notion of verification, \iel-like systems offer a more expressive logical tool for studying evidence-based knowledge and beliefs in a general setting than classical epistemic logic. The embedding results in section~\ref{sec:embed} open the door to formally connecting \iel\ with justification logics and their numerous interpretations, both mathematical and non-mathematical. 

\section{Conclusion}\label{sec:conc}

Our primary goal has been to outline an intuitionistic view of belief and knowledge by articulating their basic principles within the context of the BHK semantics, and to provide a formal foundation for further studies in intuitionistic epistemology. 

We have argued that the co-reflection principle
    \[
    A \imp \bk A
    \]
is foundational for a properly intuitionistic epistemology, valid for any intuitionistic epistemic state.
Likewise, and for virtually the same reason, the reflection principle $\bk A \imp A$ is so strong intuitionistically that it is invalid.

We have sought to show that the counter-intuitiveness of both claims is merely apparent.
In particular the loss of reflection would seem to rule out our considerations as being about knowledge at all.
We argue that, on the contrary, instead of losing the ability to reason about knowledge that we have in fact gained a more discriminating perspective, one which, via the embedding of classical epistemic logic into intuitionistic epistemic logic, can accommodate all classical epistemic reasoning as well as make distinctions not possible classically.
The prime example being the distinction between reflection and intuitionistic reflection 
\[ \bk A \imp \neg \neg A .
\]

On the basis of the BHK semantics we have outlined the basic intuitionistic systems of belief and knowledge, \ielm\ and \iel.
Of course, we do not mean to rule out extensions of the systems we outline.  
Nothing prohibits additions and refinements; this is a beginning and we hope that this paper will stimulate further research in this area.
 
\section{Acknowledgments}

The authors are grateful to Sam Buss, Lev Beklemishev, Thierry Coquand, Dirk van Dalen, Walter Dean, Melvin Fitting, Martin Hyland, Vladimir Krupski, Hidenori Kurokawa, Anil Nerode, Elena Nogina, Alessandra Palmigiano Graham Priest, and Junhua Yu for inspiring discussions and useful suggestions. 
We would also like to thank audiences at conferences and seminars in New York, Boulder, Rome, Los Angeles, Madrid, Vienna, Bucharest, Mexico City, Oberwolfach, Delft, Helsinki where versions of this paper were presented and discussed.

\begin{appendices}

\section{Completeness of Intuitionistic Epistemic Logic}\label{sec:comp}

We show that \ielm\ and \iel\ are complete with respect to the classes of corresponding models. 
The proof is a straightforward extension of the standard completeness proof for {\sf IPC}.  
We will first present a proof for $\ielm$ and then show how to modify it for $\iel$.

First we define the notion of a \emph{prime theory} over ${\mathcal L}$.

\begin{definition}\label{df:theory}
 A set of formulas, $\gg$, is a \emph{theory} if it is closed under $\proves$ in $\mathcal L$. That is, for any $A$, if $\gg \proves A$ then $A \in \gg$. 
 A set of formulas, $\gg$, is \emph{prime} if $A \lor B \in \gg$ implies that either $A \in \gg$ or $B \in \gg$. 
\end{definition}

The following lemma is established by the standard Henkin construction
\begin{lemma}\label{lm:Lindenbaum}
For a set of formulas $\gg$ and formula $A$, if $\gg \nvdash A$ then there exists a prime theory $\gd$, such that $\gg \subseteq \gd$ and $A\notin \gd$.  
\end{lemma}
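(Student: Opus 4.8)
The plan is to obtain $\gd$ as a set that is maximal among those extending $\gg$ while still failing to prove $A$, and then to show that this maximality alone forces $\gd$ to be a deductively closed prime theory. This is the standard Lindenbaum construction for \ipc, and since the only structural facts it uses are that $\mathcal{L}$ extends \ipc\ and satisfies the deduction theorem (Proposition~\ref{thm:basicprop}, part 2), the modal axioms of \ielm\ and \iel\ play no role: the same argument serves for both $\mathcal{L} \in \{\ielm, \iel\}$.

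First I would fix an enumeration $B_0, B_1, B_2, \dots$ of all formulas of $\mathcal{L}$ and build an increasing chain $\gg = \gg_0 \subseteq \gg_1 \subseteq \cdots$ by setting $\gg_{n+1} = \gg_n \cup \{B_n\}$ whenever $\gg_n \cup \{B_n\} \nproves A$, and $\gg_{n+1} = \gg_n$ otherwise. Put $\gd = \bigcup_n \gg_n$. Each finite stage satisfies $\gg_n \nproves A$, and since any derivation uses only finitely many premises, this propagates to the limit, giving $\gd \nproves A$; in particular $\gg \subseteq \gd$ and $A \notin \gd$. The construction also records the crucial maximality property: if $B \notin \gd$ then $B = B_n$ was rejected at its stage, so $\gg_n \cup \{B\} \proves A$, and by monotonicity of $\proves$ (as $\gg_n \subseteq \gd$) we get $\gd \cup \{B\} \proves A$. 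Equivalently, one could produce such a maximal $\gd$ directly by Zorn's lemma, the union of a chain of sets not proving $A$ again failing to prove $A$ by finiteness of derivations.

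Deductive closure is then immediate: if $\gd \proves B$ but $B \notin \gd$, the maximality property gives $\gd \cup \{B\} \proves A$, and cutting $B$ against $\gd \proves B$ yields $\gd \proves A$, a contradiction; hence $B \in \gd$, so $\gd$ is a theory. For primeness, suppose $B \lor C \in \gd$ while $B \notin \gd$ and $C \notin \gd$. Maximality gives $\gd \cup \{B\} \proves A$ and $\gd \cup \{C\} \proves A$, so by the deduction theorem $\gd \proves B \imp A$ and $\gd \proves C \imp A$. Intuitionistic disjunction elimination then delivers $\gd \proves (B \lor C) \imp A$, and since $B \lor C \in \gd$ we conclude $\gd \proves A$, contradicting $\gd \nproves A$. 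Therefore $B \in \gd$ or $C \in \gd$, and $\gd$ is prime.

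I expect the only real obstacle to be the bookkeeping that makes ``maximality'' applicable to $\gd$ itself rather than to some finite approximation: one must verify that a rejection occurring at a finite stage transfers to the limit set, and that $\gd \nproves A$ survives the union. Both points rest on the finiteness of derivations together with the monotonicity of $\proves$, so they are routine, and the conceptual heart of the argument is simply the case-split in the primeness step, where the deduction theorem converts the two maximality facts into a single derivation of $(B \lor C) \imp A$.
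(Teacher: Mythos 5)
Your proposal is correct and is precisely the ``standard Henkin construction'' that the paper invokes for this lemma without spelling it out: a maximal extension not proving $A$, shown to be deductively closed and prime via the deduction theorem and disjunction elimination. Your observation that the modal axioms play no role (only that $\mathcal{L}$ extends \ipc\ and satisfies the deduction theorem, Proposition~\ref{thm:basicprop}) is exactly why the paper can treat both \ielm\ and \iel\ uniformly here.
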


\begin{theorem}[Completeness of $\mathcal{L}=\ielm, \iel$]\label{th:ielcomp} If a formula $A$ holds in each mo\-del of ${\mathcal L}$, then $\mathcal{L}\proves A$.
\end{theorem}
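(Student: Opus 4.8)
The plan is to prove the contrapositive by a Henkin-style canonical model construction, extending the standard completeness proof for \ipc\ (Lemma \ref{lm:Lindenbaum}) with a treatment of the epistemic modality. Suppose $\mathcal{L}\nproves A$. Since the set of $\mathcal{L}$-theorems does not prove $A$, Lemma \ref{lm:Lindenbaum} guarantees a suitable countermodel world, so I build the \emph{canonical model} $\mcm=\langle W,R,\forces,E\rangle$ whose worlds $W$ are all \emph{consistent} prime theories over $\mathcal{L}$, with $R$ the inclusion order $\subseteq$ and the valuation given by $\gg\forces p$ iff $p\in\gg$. The restriction to consistent theories is needed so that no world forces $\bot$, which is required by the Truth Lemma below. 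For the epistemic relation, writing $\gg^{\bk}=\{B:\bk B\in\gg\}$, I set $\gg E\gd$ iff $\gg^{\bk}\subseteq\gd$. The valuation is monotone with respect to $R$, and $\langle W,R\rangle$ is a non-empty partial order, so $\langle W,R,\forces\rangle$ is a genuine intuitionistic frame.

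Next I would check that $\mcm$ satisfies the frame conditions of Definition \ref{df:ielmmod}. The inclusion $E(\gg)\subseteq R(\gg)$ follows from co-reflection: if $\gg E\gd$ and $B\in\gg$, then $\bk B\in\gg$ by $A\imp\bk A$ and closure under $\proves$, so $B\in\gg^{\bk}\subseteq\gd$. The monotonicity of audit sets, $\gg R\gg'$ yields $E(\gg')\subseteq E(\gg)$, is immediate since $\gg\subseteq\gg'$ gives $\gg^{\bk}\subseteq(\gg')^{\bk}$. Thus $\mcm$ is an \ielm-model. The heart of the argument is the \emph{Truth Lemma}: for every $\gg\in W$ and every formula $B$, $\gg\forces B$ iff $B\in\gg$, proved by induction on $B$, the propositional cases being exactly as for \ipc\ (the implication case invoking Lemma \ref{lm:Lindenbaum}). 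For the modal case I rely on the key observation that $\gg^{\bk}\proves B$ implies $\bk B\in\gg$: if $B_1,\dots,B_n\in\gg^{\bk}$ with $\proves(B_1\wedge\dots\wedge B_n)\imp B$, then necessitation (Proposition \ref{thm:basicprop}) and the distribution axiom give $\proves\bk(B_1\wedge\dots\wedge B_n)\imp\bk B$, while Proposition \ref{th:distconj} yields $\gg\proves\bk(B_1\wedge\dots\wedge B_n)$, hence $\gg\proves\bk B$. Using this, the direction $\bk B\in\gg\Rightarrow\gg\forces\bk B$ is routine; for the converse, if $\bk B\notin\gg$ then $\gg^{\bk}\nproves B$ (so in particular $\gg^{\bk}$ is consistent), and Lemma \ref{lm:Lindenbaum} produces a consistent prime $\gd\supseteq\gg^{\bk}$, i.e.\ $\gd\in E(\gg)$, with $B\notin\gd$, whence $\gd\nforces B$ by the induction hypothesis and $\gg\nforces\bk B$.

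To finish the \ielm\ case, Lemma \ref{lm:Lindenbaum} gives a consistent prime $\gg$ with $A\notin\gg$, so $\gg\nforces A$ by the Truth Lemma and $A$ fails in an \ielm-model. For \iel\ the only additional task is the non-emptiness condition $E(\gg)\neq\emptyset$, which is where I expect the main obstacle to lie and where the extra axiom is used: it amounts to showing $\gg^{\bk}$ is consistent. Were $\gg^{\bk}\proves\bot$, the same necessitation-and-distribution computation as above (with $B=\bot$) would give $\gg\proves\bk\bot$, contradicting $\iel\proves\neg\bk\bot$ (Theorem \ref{th:ielth}, part 1) together with the consistency of $\gg$. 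Hence $\gg^{\bk}$ extends to a consistent prime theory, so $E(\gg)\neq\emptyset$ and $\mcm$ is an \iel-model, the rest of the argument being unchanged. The principal subtlety throughout is keeping the interaction of $E$ with the intuitionistic order $R$ coherent, so that the single definition $\gg E\gd\Leftrightarrow\gg^{\bk}\subseteq\gd$ simultaneously delivers $E\subseteq R$ (from co-reflection), audit monotonicity, consistency of audits (from $\neg\bk\bot$), and the modal clause of the Truth Lemma.
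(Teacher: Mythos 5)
Your proposal is correct and follows essentially the same route as the paper's own proof: the identical canonical model (consistent prime theories, $R$ as inclusion, $\gg E\gd$ iff $\gg_\bk\subseteq\gd$), the same verification of the frame conditions via co-reflection and the definition of $E$, the same Truth Lemma argument using necessitation and distribution, and the same consistency argument for $\gg_\bk$ (via $\neg\bk\bot$) to secure $E(\gg)\neq\emptyset$ in the \iel\ case. The only differences are presentational, e.g.\ your explicit appeal to Proposition \ref{th:distconj} where the paper says ``some modal reasoning.''
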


\begin{proof}

We now define the canonical model. 
\begin{definition}
 The canonical model is a quadruple $\langle W, R, \forces, E\rangle$ such that: 
\begin{itemize}
 \item $W$ is the set of all consistent prime theories; 
 \item $\gg R \gd$ iff $\gg \subseteq \gd$; 
 \item $\gg E \gd$ iff $\gg_\bk \subseteq \gd$ where $\gg_\bk = \{ A\mid \bk A \in \gg \}$;
 \item $\gg \forces p \text{ iff } p \in \gg$, for a propositional letter $p$. 
\end{itemize}
\end{definition}
\begin{lemma}\label{df:cmodel}
The canonical model is a model for $\mathcal{L}$.
\end{lemma}
\begin{proof}
 Clearly, $\subseteq$ is a partial order, hence so is $R$. We need to show that $E$ is a binary relation meeting the following conditions: 

\begin{enumerate}[noitemsep]
 \item $E \subseteq R$,
 \item $\gg R \gd\ \  \Imp \ E(\gg )\supseteq E(\gd)$.
\end{enumerate}

 1. Assume $\gg E \gd$ and $X \ig$. Since $\gg$ contains $\mathcal{L}$,  $X \imp \bk X \ig$, hence $\bk X \ig$, but then $X \in \gd$. Since $X$ is arbitrary, $\gg \subseteq \gd$. 

 2. Assume $\gg R \gd E \Theta$. We have to show that $\gg E \Theta$, i.e.\ $\gg_\bk\subseteq\Theta$. Take $X \in \gg_\bk$, i.e. $\bk X \ig$. Since $\gg R \gd$ \  $\bk X \in \gd$, hence $X \in \gd_\bk$. Since $\gd E \Theta$ holds, $\gd_\bk \subseteq \Theta$, hence $X \in \Theta$. 

In addition, for $\mathcal{L}=\iel$ we have to show that $E(\gg)\neq\emptyset$ for each $\gg\in W$. Indeed, take such a $\gg$. We have to check that there is $\gd\in W$ such that $\gg_\bk\subseteq\gd$ and for this it suffices to secure the consistency of $\gg_\bk$ since then, by Lemma~\ref{lm:Lindenbaum}, a desired $\gd$ exists. Suppose $\gg_\bk$ is not consistent. Then for some $A_1,A_2,\ldots,A_n\in\gg_\bk$ 
\[ \proves (A_1\wedge A_2\wedge\ldots\wedge A_n)\imp\bot . \]
By $\bk$-necessitation and some modal reasoning, 
\[ \proves (\bk A_1\wedge \bk A_2\wedge\ldots\wedge \bk A_n)\imp\bk \bot . \]
Since $\iel\proves\bk\bot\imp\bot$, 
\[ \proves (\bk A_1\wedge \bk A_2\wedge\ldots\wedge \bk A_n)\imp \bot . \]
Since $\bk A_1,\bk A_2,\ldots,\bk A_n\in\gg$, $\gg$ is inconsistent - a contradiction.

\end{proof}

\begin{lemma}[Truth Lemma]\label{lm:tlemma}
For any formula $X$, $\ \gg \forces X \ \BImp X \in \gg$.
\end{lemma}
\begin{proof}  By induction on the construction of $X$. The propositional cases are standard, we check the epistemic case only, i.e.\ when $X$ is $\bk Y$. 

$\Imp$: Assume $\bk Y \ig$, and $\gg E \gd$, hence $Y \in \gd$. By the induction hypothesis $\gd \forces Y$. Since $\gd$ is arbitrary this holds for any state $E$-accessible from $\gg$ hence $\gg \forces \bk Y$. 

$\cImp$: Suppose $\bk Y \notin \gg$, in which case $\gg_\bk \nproves Y$. Suppose otherwise (i.e.\ suppose $\gg_\bk  \proves Y$), then $A_1 \dots A_n \proves Y$ for some $A_i \ig_\bk$. By the deduction theorem $\proves A_1 \land \dots \land A_n \imp Y$. Hence $\proves (\bk A_1 \land \dots \land \bk A_n) \imp \bk Y$. Now $\bk A_1 \dots \bk A_n \in \gg$, hence $\gg \proves \bk Y$. Since $\gg$ is a theory, $\bk Y \ig$, which is a contradiction. Hence $\gg_\bk  \nproves Y$. By Lemma \ref{lm:Lindenbaum} there is a prime $\gd$ such that $\gg_\bk  \subseteq \gd$ and $Y \notin \gd$. By the induction hypothesis $\gd \nforces Y$ hence $\gg \nforces \bk Y$. 

\end{proof}

To finish the proof of Theorem~\ref{th:ielcomp}, assume ${\mathcal L}\nproves X$, which can be read as $\emptyset\nproves X$.
  By Lemma \ref{lm:Lindenbaum} there is a prime $\gd$ s.t.\ $X \notin \gd$; such a $\gd$ is consistent. 
  By the Truth Lemma, in the canonical model $\gd \nforces X$, so ${\mathcal L}\nforces X$. 
  
\end{proof}

\section{The Truth Condition on Knowledge}\label{sec:itcond}

We stated above (section \ref{sec:knf}) that $\neg \bk \bot$ is the simplest candidate for expressing the truth condition on knowledge in an intuitionistic manner. 
In the presence of co-reflection each of the alternatives to intuitionistic reflection are equivalent (section \ref{sec:IEL}). 
It is easy to show that in the absence of co-reflection we get the following hierarchy, from strongest to weakest.

\bigskip\par

    \begin{figure}[H]
      \centering
    $\bk A \imp A$

    \bigskip

    $\Downarrow$

    \bigskip

    \mbox{$\neg(\bk A \land \neg A) \ \ \Leftrightarrow \ \ (\bk A \imp \neg\neg A) \ \ \Leftrightarrow \ \ \neg\neg(\bk A \imp A) \ \ \Leftrightarrow \ \ (\neg A \imp \neg \bk A)$}

    \bigskip

    $\Downarrow$

    \bigskip

    $\neg \bk \bot$
    \caption{Heirarchy of Intuitionistic Truth Conditions}

    \end{figure}

These dependencies can be checked in the logic $\Intk$, the intuitionistic analogue of the classical modal logic \sft{K}. 
This is \ielm\ without co-reflection but with the necessitation rule. For models of $\Intk$, see \cite{Bozic1984, Gabbay2003}.

\end{appendices}

\printbibliography

\end{document}